\documentclass[final,onefignum,onetabnum]{siamart171218}

\usepackage{amsmath,amsfonts}
\usepackage{graphicx}
\usepackage{subfig}
\usepackage{booktabs}
\usepackage{epstopdf}
\usepackage{algorithmic}
\usepackage[utf8]{inputenc}
\usepackage[T1]{fontenc}
\usepackage{array}
\usepackage{enumitem}
\usepackage{bm,dsfont}
\usepackage{etoolbox}
\newtoggle{arxiv}
\togglefalse{arxiv}

\ifpdf
  \DeclareGraphicsExtensions{.eps,.pdf,.png,.jpg}
\else
  \DeclareGraphicsExtensions{.eps}
\fi


\newsiamremark{remark}{Remark}
\newsiamremark{hypothesis}{Hypothesis}
\newsiamremark{example}{Example}
\crefname{hypothesis}{Hypothesis}{Hypotheses}
\newsiamthm{claim}{Claim}
\newsiamthm{assumption}{Assumption}
\crefname{assumption}{Assumption}{Assumptions}

\headers{3-Field FEM for Unsteady Implicitly Constituted Fluids}{P. E. Farrell, P. A. Gazca-Orozco, and  E. Süli}

\title{Numerical Analysis of Unsteady Implicitly Constituted
Incompressible Fluids: Three-Field Formulation\thanks{Submitted to the editors 2019.
\funding{This research is supported by the Engineering and Physical Sciences
Research Council [grant numbers EP/K030930/1, EP/R029423/1], and by the
EPSRC Centre For Doctoral Training in Partial Differential Equations:
Analysis and Applications [grant number EP/L015811/1]. The second author was partially supported by CONACyT (Scholarship 438269).}}}

\author{P. E. Farrell\thanks{Mathematical Institute, University of Oxford, UK (\email {patrick.farrell@maths.ox.ac.uk}).}
\and P. A. Gazca-Orozco\thanks{Mathematical Institute, University of Oxford, UK (\email {gazcaorozco@maths.ox.ac.uk}).}
\and E. Süli\thanks{Mathematical Institute, University of Oxford, UK (\email {endre.suli@maths.ox.ac.uk}).}}

\usepackage{amsopn}

\DeclareMathOperator*{\esslim}{ess\,lim}

\begin{document}

\maketitle
\begin{abstract}
In the classical theory of fluid mechanics a linear relationship between the shear stress and the \textcolor{black}{symmetric velocity gradient} tensor is often assumed. Even when a nonlinear relationship is assumed, it is typically formulated in terms of an explicit relation. Implicit constitutive models provide a theoretical framework that generalises this, allowing for general implicit constitutive relations. Since it is generally not possible to solve explicitly for the shear stress in the constitutive relation, a natural approach is to include the shear stress as a fundamental unknown in the formulation of the problem. In this work we present a mixed formulation with this feature, discuss its solvability and approximation using mixed finite element methods, and explore the convergence of the numerical approximations to a weak solution of the model.
\end{abstract}

\begin{keywords}
Implicitly constituted models, non--Newtonian fluids, finite element method
\end{keywords}

\begin{AMS}
65M60, 65M12, 35Q35, 76A05
\end{AMS}

\section{Implicitly constituted models}
In the classical theory of continuum mechanics the balance laws of momentum, mass, and energy do not determine completely the behaviour of a system. Additional information that captures the specific properties of the material to be studied is needed; this is what is commonly known as a \emph{constitutive relation}. The constitutive law usually expresses the stress tensor in terms of other kinematical quantities (e.g.\ the \textcolor{black}{symmetric velocity gradient}) and, even if it is nonlinear, it is typically formulated by means of an explicit relationship. It has been known for some time that in many cases explicit constitutive relations are not adequate when modeling materials with viscoelastic or inelastic responses (see e.g.\ \cite{Rajagopal:2003,Rajagopal2006}), which has led to the introduction of many ad-hoc models that try to fit the experimental data. Implicitly constituted models, introduced in \cite{Rajagopal:2003}, provide a theoretical framework that not only serves to justify these ad-hoc models, but also generalises them. The physical justification of these types of models, including a study of their thermodynamical consistency, is available and will not be discussed here; the interested reader is referred to  \cite{Rajagopal2007,Rajagopal2006,Rajagopal2008}.\\
\indent If a fluid occupies part of a space represented by a simply-connected open set $\Omega\subset \mathbb{R}^d$, where $d\in\{2,3\}$, then the evolution of the system during a given time interval $[0,T)$, for $T>0$, is determined by the usual equations of balance of mass, momentum, angular momentum and energy, which in Eulerian coordinates take the form:
\begin{align}\label{ConsLaws}
\frac{\partial \rho}{\partial t} + \text{div} (\rho \bm{u}) &= 0, \notag\\
\frac{\partial (\rho\bm{u})}{\partial t} + \text{div} (\rho \bm{u}\otimes\bm{u}) &= \text{div}\,  \bm{T} + \rho \bm{f}, \\
\bm{T} &= \bm{T}^\text{T}, \notag\\
\frac{\partial (\rho e)}{\partial t} + \text{div} (\rho e \bm{u}) &= \text{div}(\bm{T}\bm{u} - \bm{q}). \notag
\end{align}

Here:
\begin{itemize}
\item $\bm{u}:[0,T)\times\overline{\Omega} \rightarrow \mathbb{R}^d$ is the velocity field;
\item $\rho:[0,T)\times\overline{\Omega} \rightarrow \mathbb{R}$ is the density;
\item $\bm{T}:(0,T)\times\overline{\Omega} \rightarrow \mathbb{R}^{d\times d}$ is the Cauchy stress;
\item $e:[0,T)\times\overline{\Omega}\rightarrow \mathbb{R}$ is the internal energy;
\item $\bm{q}:(0,T)\times\overline{\Omega} \rightarrow \mathbb{R}^d$ is the heat flux.
\end{itemize}

The constitutive law relates the Cauchy stress (or some other appropriate measure of the stress) and the heat flux to other kinematical variables such as the shear strain, temperature, etc. In the following we will assume that the material is incompressible, \textcolor{black}{homogeneous} and undergoes an isothermal process. This implies that the energy equation decouples from the system and that the Cauchy stress can be split in two components:
\begin{equation}\label{splitting_stress}
\bm{T} = -p \bm{I} + \bm{S},
\end{equation}
\noindent
where $\bm{I}$ is the identity matrix, $p\colon(0,T)\times\Omega\rightarrow \mathbb{R}$ is the pressure (mean normal stress), and $\bm{S}\colon (0,T)\times\Omega\rightarrow \mathbb{R}^{d\times d}_{\text{sym}}$ is the shear stress (hereafter referred only as ``stress'').  In this work we will consider constitutive relations of the form
\begin{equation}\label{ConstRelG}
\bm{G}(\cdot,\bm{S},\textcolor{black}{\bm{D}(\bm{u})}) = \bm{0},
\end{equation}
\noindent
where $\bm{G}\colon Q\times \mathbb{R}_{\text{sym}}^{d\times d}\times \mathbb{R}_{\text{sym}}^{d\times d}\rightarrow \mathbb{R}\textcolor{black}{^{d\times d}_\text{sym}}$ and $\bm{D}(\bm{u}) := \frac{1}{2}(\nabla \bm{u} + (\nabla\bm{u})^\text{T})$ is the \textcolor{black}{symmetric velocity gradient}; here $Q$ is used to denote the parabolic cylinder $(0,T)\times\Omega$. The precise assumptions on this implicit function will be stated in the next section.

For a rigorous mathematical analysis of models of implicitly constituted fluids the reader is referred to \cite{Bulicek:2009,Bulicek:2012}. Existence of weak solutions for problems of this type was obtained in \cite{Bulicek:2009} and \cite{Bulicek:2012} for the steady and unsteady cases, respectively. Some extensions include \cite{Bulicek2018,Maringova2018,Prusa2018}, where additional physical responses are incorporated into the system.

As for the numerical analysis of these systems, very few results have been published so far. In \cite{Diening:2013} the convergence of a finite element discretisation to a weak solution of the problem was proved for the steady case, and the corresponding a-posteriori analysis was carried out in \cite{refId0}. More recently, this approach was extended to the time-dependent case in \cite{Sueli2018}. Also, several finite element discretisations were compared computationally in \cite{Hron2017} for problems with Bingham and stress-power-law-like rheology.

Numerical methods for the incompressible Navier--Stokes equations are usually based on a velocity-pressure formulation, and extensive studies have been carried out over the years in relation to this (see e.g.\ \cite{Girault1986,Brezzi:1991}). Such a formulation is possible, because in the case of a Newtonian fluid the explicit constitutive relation $\bm{S} = 2\mu\bm{D}(\bm{u})$ allows one to eliminate the deviatoric stress $\bm{S}$ from the momentum equation. In contrast, formulations that treat the stress as a fundamental unknown have also been introduced to study problems in elasticity and incompressible flows \cite{Arnold1984,Behr1993,Farhloul1993,Farhloul1996,Arnold2002,Ervin2008,Farhloul2008,Farhloul2009,Howell2009,Howell2016};  the key advantages of these formulations are that they are naturally applicable to nonlinear constitutive models where it is not possible to eliminate the stress, and that they allow the direct computation of the stress without resorting to numerical differentiation. In this work we will consider the mathematical analysis of a mixed formulation that treats the stress as an unknown, and illustrate its performance by means of numerical simulations.

The results here could be considered an extension of the works \cite{Diening:2013,Sueli2018,Hron2017}. One of the advantages of the approach presented here with respect to \cite{Diening:2013,Sueli2018} is that it can handle the constitutive relation in a more natural way, since the stress plays a more prominent role in the weak formulation considered. In addition, in \cite{Diening:2013,Sueli2018} no numerical simulations were presented. On the other hand, while extensive numerical computations with 3-field and 4-field formulations were performed in \cite{Hron2017}, no convergence analysis of the methods considered was discussed. The work presented here fills this gap.

\section{Preliminaries}
\subsection{Function spaces}
Throughout this work we will assume that $\Omega\subset\mathbb{R}^d$, with $d\in\{2,3\}$, is a bounded Lipschitz polygonal domain (unless otherwise stated), and use standard notation for Lebesgue, Sobolev and Bochner--Sobolev spaces (e.g.\ $(W^{k,r}(\Omega),\|\cdot\|_{W^{k,r}(\Omega)})$ and $(L^q(0,T;W^{n,r}(\Omega)),\|\cdot\|_{L^q(0,T;W^{n,r}(\Omega))})$). We will define $W^{k,r}_0(\Omega)$ for $r\in [1,\infty)$ as the closure of the space of smooth functions with compact support $C_0^\infty(\Omega)$ with respect to the norm $\|\cdot\|_{W^{k,r}(\Omega)}$ and  we will denote the dual space of $W^{1,r}_0(\Omega)$ by $W^{-1,r'}(\Omega)$. Here $r'$ is used to denote the Hölder conjugate of $r$, i.e.\ the number defined by the relation $1/r + 1/r' =1$. The duality pairing will be written in the usual way using brackets $\langle\cdot ,\cdot\rangle$. The space of traces on the boundary of functions in $W^{1,r}(\Omega)$ will be denoted by $W^{1/r',r}(\partial\Omega)$.

If $X$ is a Banach space, $C_w([0,T];X)$ will be used to denote the space of continuous functions in time with respect to the weak topology of $X$. For $r\in [1,\infty)$ we also define the following useful subspaces:
\begin{gather*}
L^r_0(\Omega) := \left\{q\in L^r(\Omega) \, : \, \int_\Omega q = 0\right\},\\
L^2_\text{div}(\Omega)^d := \overline{\{\bm{v}\in C^\infty_0(\Omega)^d\, :\, \text{div}\,\bm{v}=0\}}^{\|\cdot\|_{L^2(\Omega)}},\\
W^{1,r}_{0,\text{div}}(\Omega)^d := \overline{\{\bm{v}\in C^\infty_0(\Omega)^d\, :\, \text{div}\,\bm{v}=0\}}^{\|\cdot\|_{W^{1,r}(\Omega)}},\\
L_{\text{tr}}^r(Q)^{d\times d} := \{\bm{\tau}\in L^r(Q)^{d\times d}\, : \, \text{tr}(\bm{\tau}) = 0\},\\
L_{\text{sym}}^r(Q)^{d\times d} := \{\bm{\tau}\in L^r(Q)^{d\times d}\, : \, \bm{\tau}^\mathrm{T} = \bm{\tau}\}.
\end{gather*}

\textcolor{black}{In the definition of the space $L_{\text{tr}}^r(Q)^{d\times d}$ above, $\text{tr}(\bm{\tau})$ denotes the usual matrix trace of the $d\times d$ matrix function $\bm{\tau}$.} In the various estimates the letter $c$ will denote a generic positive constant whose exact value could change from line to line, whenever the explicit dependence on the parameters is not important.

\subsection{Interpolation inequalities}
The following embeddings will be useful when deriving various estimates. Assume that the Banach spaces $(W_1,W_2,W_3)$ form an interpolation triple in the sense that
\begin{equation*}
	\|v\|_{W_2} \leq c \|v\|_{W_1}^\lambda\|v\|_{W_3}^{1-\lambda},\quad\text{for some }\lambda\in (0,1),
\end{equation*}
and $W_1\hookrightarrow W_2 \hookrightarrow W_3$. Then (cf.\ \cite{Roubicek2013}) $L^r(0,T;W_1)\cap L^\infty(0,T;W_3)\hookrightarrow L^{r/\lambda}(0,T;W_2)$, for $r\in [1,\infty)$ and 
\begin{equation}
\|v\|_{L^{r/\lambda}(0,T;W_2)} \leq c \|v\|_{ L^\infty(0,T;W_3)}^{1-\lambda}\|v\|_{L^r(0,T;W_1)}^\lambda.
\end{equation}
An example of an interpolation triple that can be combined with this result is given by the Gagliardo--Nirenberg inequality, which states that for given $p,r\in [1,\infty)$, there is a constant $c_{p,r}>0$ such that \cite{DiBenedetto:1993}:
\begin{equation}
\|v\|_{L^s(\Omega)} \leq c_{p,r}\|\nabla v\|^\lambda_{L^r(\Omega)} \| v\|^{1-\lambda}_{L^p(\Omega)}\qquad \forall\, v\in W^{1,r}_0(\Omega)\cap L^p(\Omega),
\end{equation}

provided that $s\in[1,\infty)$ and $\lambda\in(0,1)$ satisfy
\begin{equation*}
\lambda = \frac{\frac{1}{p}-\frac{1}{s}}{\frac{1}{d} - \frac{1}{r} + \frac{1}{p}}.
\end{equation*}
A particularly useful example can be obtained if we assume that $r\textcolor{black}{>} \frac{2d}{d+2}$ and take $p=2$ and $\lambda = \frac{d}{d+2}$:
\small
\begin{equation}\label{parabolic_embedding_r}
\|v\|_{L^{\frac{r(d+2)}{d}}(Q)} \leq c \|\nabla v\|^\lambda_{L^r(Q)}\|v\|^{1-\lambda}_{L^\infty(0,T;L^2(\Omega))} \qquad \forall\, v\in L^r(0,T;W^{1,r}_0(\Omega))\cap L^\infty(0,T;L^2(\Omega)).
\end{equation}
\normalsize
\subsection{Compactness and continuity in time}
In this work we will use Simon's compactness lemma (see \cite{Simon1987}) instead of the usual Aubin--Lions lemma to extract convergent subsequences when taking the discretisation limit in the time--dependent problem. Assume that $X$ and $H$ are Banach spaces such that the compact embedding $X\hookrightarrow\hookrightarrow H$ holds. Simon's lemma states that if $\mathcal{U}\subset L^p(0,T;H)$, for some $p\in [1,\infty)$, and it satisfies:
\begin{itemize}
\item $\mathcal{U}$ is bounded in $L^1_\text{loc}(0,T;X)$;
\item $\int_0^{T-\epsilon}\|v(t+\epsilon,\cdot)-v(t,\cdot)  \|_H^p\rightarrow 0$, as $\epsilon\rightarrow 0$, uniformly for $v\in \mathcal{U}$;
\end{itemize}
then $\mathcal{U}$ is relatively compact in $L^p(0,T;H)$.

Let $X$ and $V$ be reflexive Banach spaces such that $X\hookrightarrow V$ \textcolor{black}{densely} and let $V^*$ be the dual space of $V$. The following continuity properties (see \cite{Roubicek2013}) will be important when identifying the initial condition:
\begin{gather}
v\in L^1(0,T;V^*),\, \partial_t v\in L^1(0,T;V^*)\Longrightarrow v\in C([0,T];V^*),\label{cont_time_embedding1}\\
v\in L^\infty(0,T;X)\cap C_w([0,T];V) \Longrightarrow v\in C_w([0,T];X).\label{cont_time_embedding2}
\end{gather}

\subsection{Implicit constitutive relation and its approximation}\label{implicit_c_rel}
In the mathematical analysis of these systems it is more convenient to work not with the function $\bm{G}$, but with its graph $\mathcal{A}$, which is introduced in the usual way:
\begin{equation}
(\bm{D},\bm{S})\in\mathcal{A}(\cdot) \Longleftrightarrow \bm{G}(\cdot,\bm{S},\bm{D})=\bm{0}.
\end{equation}
We will assume that $\mathcal{A}$ is a \emph{maximal monotone $r$-graph} \textcolor{black}{for some $r>1$}, which means that the following properties hold for almost every $z\in Q$:
\begin{enumerate}
\item[(A1)] [$\mathcal{A}$\textit{ includes the origin}] $(\mathbf{0},\mathbf{0})\in \mathcal{A}(z)$.
\item[(A2)] [$\mathcal{A}$\textit{ is a monotone graph}] For every $(\mathbf{D}_1,\mathbf{S}_1), (\mathbf{D}_2,\mathbf{S}_2)\in \mathcal{A}(z)$,
\[(\mathbf{S}_1 - \mathbf{S}_2):(\mathbf{D}_1 -\mathbf{D_2})\geq 0.\]
\item[(A3)] [$\mathcal{A}$\textit{ is maximal monotone}] If $(\mathbf{D},\mathbf{S}) \in\mathbb{R}_{\text{sym}}^{d\times d}\times \mathbb{R}_{\text{sym}}^{d\times d}$ is such that
\[(\hat{\mathbf{S}} - \mathbf{S}):(\hat{\mathbf{D}} -\mathbf{D})\geq 0 \quad \text{for all }(\hat{\mathbf{D}},\hat{\mathbf{S}})\in \mathcal{A}(z),\]
then $(\mathbf{D},\mathbf{S}) \in \mathcal{A}(z)$.
\item[(A4)] [$\mathcal{A}$\textit{ is an }$r$\textit{-graph}] There is a non-negative function $m\in L ^1(Q)$ and a constant $c>0$ such that
\[\mathbf{S}:\mathbf{D} \geq -m + c(|\mathbf{D}|^r + |\mathbf{S}|^{r'})\quad \text{for all } (\bm{D},\bm{S})\in\mathcal{A}(z).\]
\item[(A5)] [\textit{Measurability}] \textcolor{black}{The set-valued map $z\mapsto \mathcal{A}(z)$ is $\mathcal{L}(Q)$--$(\mathcal{B}(\mathbb{R}^{d\times d}_\text{sym}\otimes \mathbb{R}^{d\times d}_\text{sym}))$ measurable; here $\mathcal{L}(Q)$ denotes the family of Lebesgue measurable subsets of $Q$ and $\mathcal{B}(\mathbb{R}^{d\times d}_\text{sym})$ is the family of Borel subsets of $\mathbb{R}^{d\times d}_\text{sym}$.}
\item[(A6)] [\textit{Compatibility}] For any $(\bm{D},\bm{S})\in\mathcal{A}(z)$ we have that
\begin{equation*}
\text{tr}(\bm{D}) = 0 \Longleftrightarrow \text{tr}(\bm{S})=0.
\end{equation*}
\end{enumerate}
Assumption (A6) was not included in the original works \cite{Bulicek:2009,Bulicek:2012,Diening:2013}, but it is needed for consistency with the physical property that $\bm{S}$ is traceless if and only if the velocity field is divergence-free (see the discussion in \cite{Tscherpel2018}). \textcolor{black}{A very important consequence of Assumption (A5) (see \cite{Tscherpel2018}) is the existence of a measurable function (usually called a \emph{selection}) $\bm{\mathcal{D}}:Q\times \mathbb{R}^{d\times d}_{sym} \rightarrow \mathbb{R}^{d\times d}_{sym}$ such that $(\bm{\mathcal{D}}(z,\bm{\sigma}),\bm{\sigma})\in\mathcal{A}(z)$ for all $\bm{\sigma}\in \mathbb{R}^{d\times d}_{sym}$.}

In the existence results it will be useful to approximate the selection using smooth functions. To that end, let us define the mollification:
\begin{equation}\label{mollification}
\textcolor{black}{\bm{\mathcal{D}}^k(\cdot,\bm{\sigma})} := \int_{\mathbb{R}^{d\times d}_\text{sym}} \textcolor{black}{\bm{\mathcal{D}}(\cdot,\bm{\sigma} - \bm{\tau})}\rho^k(\bm{\tau})\,\text{d}\bm{\tau},
\end{equation}
where $\rho^k(\bm{\tau}) = k^{d^2}\rho(k\bm{\tau})$, $k\in\mathbb{N}$, and $\rho\in C_0^\infty(\mathbb{R}^{d\times d}_{\text{sym}})$ is a mollification kernel. It is possible to check (see e.g.\ \cite{Tscherpel2018}) that this mollification satisfies analogous monotonicity and coercivity properties to those of the selection $\textcolor{black}{\bm{\mathcal{D}}}$, i.e.\ we have that
\begin{itemize}
\item For every $\bm{\tau}_1,\bm{\tau}_2\in \mathbb{R}^{d\times d}_{\text{sym}}$ and for almost every $z\in Q$ the monotonicity condition
\begin{equation}\label{As_Sel_Aprr1}
(\textcolor{black}{\bm{\mathcal{D}}}^k(z,\bm{\tau}_1)-\textcolor{black}{\bm{\mathcal{D}}}^k(z,\bm{\tau}_2)):(\bm{\tau}_1 -\bm{\tau}_2)\geq 0
\end{equation}
holds.
\item There is a constant $C_*>0$ and a nonnegative function $g\in L^1(Q)$ such that for all $k\in\mathbb{N}$, for every $\bm{\tau}\in\mathbb{R}^{d\times d }$, and for almost every $z\in Q$ we have
\begin{equation}\label{As_Sel_Aprr2}
\bm{\tau}:\textcolor{black}{\bm{\mathcal{D}}}^k(z,\bm{\tau}) \geq -g(z) + C_*(|\bm{\tau}|^{r'} + |\textcolor{black}{\bm{\mathcal{D}}}^k(z,\bm{\tau})|^r).
\end{equation}
\item For any sequence $\{\bm{S}_k\}_{k\in\mathbb{N}}$ bounded in $L^{r'}(Q)^{d\times d}$, we have for arbitrary  $\bm{B}\in\mathbb{R}^{d\times d}_\text{sym}$ and $\phi \in C^{\infty}_0(Q)$ with $\phi \geq 0$:
\begin{equation}\label{As_Sel_Aprr3}
\liminf_{k\rightarrow\infty} \int_Q (\textcolor{black}{\bm{\mathcal{D}}}\textcolor{black}{^k}(\cdot,\bm{S}^k)-\textcolor{black}{\bm{\mathcal{D}}}(\cdot,\bm{B})):(\bm{S}^k - \bm{B})\phi(\cdot) \geq 0.
\end{equation}
\end{itemize}
It is important to remark that \eqref{As_Sel_Aprr1}, \eqref{As_Sel_Aprr2} and \eqref{As_Sel_Aprr3} are the essential properties; the explicit form \eqref{mollification} of the approximation to the selection is not very important. There are other ways to achieve the same result; for instance piecewise affine interpolation or a generalised Yosida approximation could also be used (see \cite{Sueli2018,Tscherpel2018}). The following is a localized version of Minty's lemma that will aid in the identification of the implicit constitutive relation (for a proof see \cite{Bulicek:2012aa}).
\begin{lemma}\label{LocalizedMinty}
Let $\mathcal{A}$ be a maximal monotone $r$-graph satisfying (A1)--(A4) \textcolor{black}{for some $r>1$}. Suppose that $\{\bm{D}^n\}_{n\in\mathbb{N}}$ and $\{\bm{S}^n\}_{n\in\mathbb{N}}$ are sequences of functions defined on a measurable set $\hat{Q}\subset Q$, such that:
\begin{align*}
(\bm{D}^n(\cdot),\bm{S}^n(\cdot))&\in\mathcal{A}(\cdot)\qquad\hspace{-1.5cm} &\text{a.e. in }\hat{Q},\\
\bm{D}^n &\rightharpoonup \bm{D},\qquad &\text{weakly in }L^{r}(\hat{Q})^{d\times d},\\
\bm{S}^n &\rightharpoonup \bm{S},\qquad &\text{weakly in }L^{r'}(\hat{Q})^{d\times d},\\
\limsup_{n\rightarrow\infty}\int_{\hat{Q}} \bm{S}^n&:\bm{D}^n \leq \int_{\hat{Q}} \bm{S}:\bm{D}.&
\end{align*}
Then,
\begin{equation*}
(\bm{D}(\cdot),\bm{S}(\cdot))\in \mathcal{A}(\cdot)\qquad \text{a.e. in }\hat{Q}.
\end{equation*}
\end{lemma}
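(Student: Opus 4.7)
The plan is a Minty--Browder argument. I proceed in three stages: derive an integrated monotonicity inequality against a suitable family of measurable test pairs, localise it to a pointwise statement, and then apply maximal monotonicity (A3).

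For the first stage, let $(\hat{\bm{D}},\hat{\bm{S}})$ be a measurable $\mathcal{A}$-valued pair on $\hat{Q}$ with $\hat{\bm{S}}$ constant equal to $\bm{B}\in\mathbb{R}^{d\times d}_{\text{sym}}$ (such a selection is afforded by the structure of a maximal monotone $r$-graph; property (A4) then places $\hat{\bm{D}}\in L^r(\hat{Q})^{d\times d}$). Monotonicity (A2) gives
\begin{equation*}
(\bm{S}^n - \bm{B}):(\bm{D}^n - \hat{\bm{D}}) \geq 0 \quad \text{a.e.\ in } \hat{Q}.
\end{equation*}
Integrating over $\hat{Q}$ and expanding, the three cross terms pass to the limit under the weak convergences $\bm{D}^n\rightharpoonup\bm{D}$ in $L^r$ and $\bm{S}^n\rightharpoonup\bm{S}$ in $L^{r'}$, while $\int_{\hat{Q}}\bm{S}^n:\bm{D}^n$ is bounded above by the $\limsup$ hypothesis. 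Hence
\begin{equation*}
\int_{\hat{Q}} (\bm{S} - \bm{B}):(\bm{D} - \hat{\bm{D}}) \geq 0.
\end{equation*}

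The second stage localises this global inequality to a pointwise one, and is the main obstacle. To insert a nonnegative cutoff $\phi\in C^\infty_c(\hat{Q})$ into the above computation, one needs $\limsup_n\int\phi\,\bm{S}^n:\bm{D}^n \leq \int\phi\,\bm{S}:\bm{D}$, whereas the hypothesis only provides the unweighted version. My strategy is first to establish strong $L^1(\hat{Q})$ convergence $\bm{S}^n:\bm{D}^n \to \bm{S}:\bm{D}$. The integrand $(\bm{S}^n-\bm{B}):(\bm{D}^n-\hat{\bm{D}})$ is nonnegative by (A2), and its integral is pinched from above by the first stage's computation; combined with equi-integrability of $\bm{S}^n:\bm{D}^n$ (obtained from the $r$-graph coercivity (A4) via a Dunford--Pettis criterion) and Vitali's convergence theorem, one upgrades to strong $L^1$ convergence. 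The weighted monotonicity inequality is then available, and Lebesgue differentiation yields
\begin{equation*}
(\bm{S}(z) - \bm{B}):(\bm{D}(z) - \hat{\bm{D}}(z)) \geq 0 \quad \text{for a.e.\ } z \in \hat{Q}.
\end{equation*}
Varying $\bm{B}$ through a countable dense subset of $\mathbb{R}^{d\times d}_{\text{sym}}$ and extending by continuity (afforded by the coercivity in (A4)) produces this pointwise inequality simultaneously for every constant test $\bm{B}$, and then for every $\mathcal{A}$-admissible pair by revisiting the argument with non-constant measurable $\hat{\bm{S}}$.

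Finally, at each $z$ where the pointwise monotonicity holds against every $(\hat{\bm{D}},\hat{\bm{S}})\in\mathcal{A}(z)$, assumption (A3) forces $(\bm{D}(z),\bm{S}(z))\in\mathcal{A}(z)$, which completes the argument.
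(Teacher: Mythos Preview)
The paper itself does not prove this lemma; it refers the reader to \cite{Bulicek:2012aa}. Your three-stage outline (integrated monotonicity against selections, localisation, pointwise maximal monotonicity) matches the architecture of the standard proof, and stages~1 and~3 are sound. The gap is in stage~2. Assumption (A4) is a \emph{lower} bound $\bm{S}:\bm{D}\geq -m + c(|\bm{D}|^r+|\bm{S}|^{r'})$, which lets you dominate $|\bm{D}^n|^r+|\bm{S}^n|^{r'}$ by $\bm{S}^n:\bm{D}^n + m$, not the other way round; weak convergence of $\bm{D}^n$ in $L^r$ and of $\bm{S}^n$ in $L^{r'}$ gives only boundedness of $|\bm{D}^n|^r$ and $|\bm{S}^n|^{r'}$ in $L^1$, not equi-integrability, so there is no Dunford--Pettis route to uniform integrability of the product $\bm{S}^n:\bm{D}^n$. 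Vitali's theorem furthermore requires convergence in measure of $\bm{S}^n:\bm{D}^n$, which you have not established either. The passage from the unweighted $\limsup$ hypothesis to a weighted inequality against a cutoff $\phi$ is therefore not justified by the argument you outline.

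The standard remedy, as in \cite{Bulicek:2012aa}, is Chacon's biting lemma: since $\bm{S}^n:\bm{D}^n\geq 0$ (by (A1)--(A2)) and is bounded in $L^1(\hat{Q})$, there exist nested sets $E_k\subset\hat{Q}$ with $|E_k|\to 0$ such that, along a subsequence, $\bm{S}^n:\bm{D}^n$ converges weakly in $L^1(\hat{Q}\setminus E_k)$ for each $k$. This weak $L^1$ limit, together with the nonnegativity of $(\bm{S}^n-\bm{B}):(\bm{D}^n-\hat{\bm{D}})$ and the global $\limsup$ bound, suffices to run the Minty argument pointwise on $\hat{Q}\setminus E_k$; letting $k\to\infty$ then yields the conclusion a.e.\ on $\hat{Q}$. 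Strong $L^1$ convergence of $\bm{S}^n:\bm{D}^n$ is neither needed nor available at that stage of the argument.
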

The goal of this work is to prove convergence of a three-field finite element approximation of the following system:
\begin{equation}\label{PDE_goal}
  \renewcommand{\arraystretch}{1.2}
  \begin{array}{r@{}>{\null}l@{\qquad}r@{}>{\null}l}
  \partial_t\bm{u}-\text{div} (\bm{S}- \bm{u}\otimes\bm{u}) + \nabla p &= \bm{f} &  & \text{in }(0,T)\times\Omega,\\
\text{div}\, \bm{u} &= 0 &  & \text{in }(0,T)\times\Omega,\\
(\bm{D}(\bm{u}),\bm{S})  &\in \mathcal{A}(\cdot) & & \text{a.e. in }(0,T)\times\Omega,\\
\bm{u} &= \bm{0} & & \text{on }(0,T)\times\partial\Omega,\\
\bm{u}(0,\cdot) &= \bm{u}_0(\cdot) & & \text{in }\Omega,
  \end{array}  
\end{equation}
where $\mathcal{A}(\cdot)$ satisfies (A1)--(A6). The next section introduces the notation and tools that will be useful in the analysis of the discrete problem.

\subsection{Finite element approximation}
In this section, the notation and assumptions regarding the finite element approximation will be presented. Essentially the same arguments would work for any method based on a Galerkin approximation, but here we will focus only on finite element methods. Consider a family of triangulations $\{\mathcal{T}_n\}_{n\in\mathbb{N}}$ of $\Omega$ satisfying the following assumptions:
\begin{itemize}
\item (Affine equivalence). Given $n\in\mathbb{N}$ and an element $K\in \mathcal{T}_n$, there is an affine invertible mapping $\bm{F}_K\colon K\rightarrow \hat{K}$, where $\hat{K}$ is the closed standard reference simplex in $\mathbb{R}^d$.
\item (Shape-regularity). There is a constant $c_\tau$, independent of $n$, such that $$h_K \leq c_\tau \rho_K \qquad \text{for every }K\in\mathcal{T}_n,\, n\in\mathbb{N},$$
where $h_K := \text{diam}(K)$ and $\rho_K$ is the diameter of the largest inscribed ball.
\item The mesh size $h_n := \max_{K\in\mathcal{T}_n} h_K$ tends to zero as $n\rightarrow\infty$.
\end{itemize}
Define the  conforming finite element spaces associated with the triangulation $\mathcal{T}_n$:
\begin{gather*}
	V^n := \left\{\bm{v}\in \textcolor{black}{W^{1,\infty}_0(\Omega)^d}\, :\, \bm{v}|_K \circ \bm{F}_K^{-1}\in \hat{\mathbb{P}}_\mathbb{V},\, K\in\mathcal{T}_n,\, \bm{v}|_{\partial\Omega} = 0\right\},\\
	M^n := \left\{\bm{q}\in \textcolor{black}{L^\infty(\Omega)}\, :\, \bm{q}|_K \circ \bm{F}_K^{-1}\in \hat{\mathbb{P}}_\mathbb{M},\, K\in\mathcal{T}_n\right\},\\
	\Sigma^n := \left\{\bm{\sigma}\in \textcolor{black}{L^\infty(\Omega)^{d\times d}}:\, \bm{\sigma}|_K \circ \bm{F}_K^{-1}\in \hat{\mathbb{P}}_\mathbb{S},\, K\in\mathcal{T}_n\right\},
\end{gather*}
where $\hat{\mathbb{P}}_\mathbb{V}\subset W^{1,\infty}(\hat{K})^d$, $\hat{\mathbb{P}}_\mathbb{M}\subset L^\infty(\hat{K})$ and $\hat{\mathbb{P}}_\mathbb{S}\subset L^\infty(\hat{K})^{d\times d}$ are finite-dimensional polynomial subspaces on the reference simplex $\hat{K}$. Each of these spaces will be assumed to have a finite and locally supported basis. As in the continuous case, it will be useful to introduce the following finite-dimensional subspaces \textcolor{black}{for $r>1$}:
\begin{gather*}
M^n_0 := M^n \cap L_0^{r'}(\Omega),\quad \Sigma^n_\text{tr} := \Sigma^n\cap L^{r}_\text{tr}(\Omega)^{d\times d},\quad \Sigma^n_\text{sym} := \Sigma^n\cap L^{r}_\text{sym}(\Omega)^{d\times d},\\
V^n_\text{div} := \left\{\bm{v}\in V^n\, :\, \int_\Omega q\,\text{div}\bm{v} = 0,\quad \forall\, q\in M^n\right\},\\
\Sigma^n_\text{div}(\bm{f}) := \left\{\bm{\sigma}\in \Sigma^n_\text{sym} \, :\,  \int_\Omega \bm{\sigma}:\bm{D}(\bm{v}) = \langle \bm{f},\bm{v}\rangle,\quad \forall\, \bm{v}\in V^n_\text{div}\right\}.
\end{gather*}
\begin{assumption}[Approximability]\label{Approximability}
For every $s\in [1,\infty)$ we have that 
\begin{gather*}
\inf_{\overline{\bm{v}}\in V^n}\|\bm{v}-\overline{\bm{v}}\|_{W^{1,s}(\Omega)} \rightarrow 0 \quad \text{ as }n\rightarrow\infty\quad \forall\,\bm{v}\in W^{1,s}_0(\Omega)^d,\\
\inf_{\overline{q}\in M^n}\|q-\overline{q}\|_{L^{s}(\Omega)} \rightarrow 0 \quad \text{ as }n\rightarrow\infty\quad \forall\,q\in L^s(\Omega),\\
\inf_{\overline{\bm{\sigma}}\in \Sigma^n}\|\bm{\sigma}-\overline{\bm{\sigma}}\|_{L^{s}(\Omega)} \rightarrow 0 \quad \text{ as }n\rightarrow\infty\quad \forall\,\bm{\sigma}\in L^s(\Omega)^{d\times d}. 
\end{gather*}
\end{assumption}
\begin{assumption}[Projector $\Pi^n_\Sigma$]\label{Projector_Stress}
For each $n\in\mathbb{N}$ there is a linear projector $\Pi^n_\Sigma :L_\text{\emph{sym}}^1(\Omega)^{d\times d}\rightarrow \Sigma^n_\text{\emph{sym}}$ such that:
\begin{itemize}[leftmargin = 0.8cm]
\item (Preservation of divergence). For every $\bm{\sigma}\in L^{1}(\Omega)^{d\times d}$ we have 
\begin{equation*}
\int_\Omega \bm{\sigma}:\bm{D}(\bm{v}) = \int_\Omega \Pi^n_\Sigma(\bm{\sigma}):\bm{D}(\bm{v}) \quad \forall\, \bm{v}\in V^n_\text{\emph{div}}.
\end{equation*}
\item ($L^{s}$--stability). For every $s\in (1,\infty)$ there is a constant $c>0$, independent of $n$, such that:
\begin{equation*}
\|\Pi^n_\Sigma \bm{\sigma}\|_{L^{s}(\Omega)} \leq c \|\bm{\sigma}\|_{L^{s}(\Omega)}\qquad \forall\, \bm{\sigma}\in L_\text{\emph{sym}}^s(\Omega)^{d\times d}.
\end{equation*}
\end{itemize}
\end{assumption}
\begin{assumption}[Projector $\Pi^n_V$]\label{Projector_Velocity}
For each $n\in\mathbb{N}$ there is a linear projector $\Pi^n_V :W^{1,1}_0(\Omega)^d\rightarrow V^n$ such that the following properties hold:
\begin{itemize}[leftmargin = 0.8cm]
\item (Preservation of divergence). For every $\bm{v}\in W^{1,1}_0(\Omega)^d$ we have 
\begin{equation*}
\int_\Omega q\,\text{\emph{div}}\,\bm{v} = \int_\Omega q\,\text{\emph{div}}(\Pi^n_V\bm{v}) \quad\, \forall\, q\in M^n.
\end{equation*}
\item ($W^{1,s}$--stability). For every $s\in (1,\infty)$ there is a constant $c>0$, independent of $n$, such that:
\begin{equation*}
\|\Pi^n_V\bm{v}\|_{W^{1,s}(\Omega)} \leq c \|\bm{v}\|_{W^{1,s}(\Omega)}\qquad \forall \,\bm{v}\in W^{1,s}_0(\Omega)^d.
\end{equation*}
\end{itemize}
\end{assumption}
\begin{assumption}[Projector $\Pi^n_M$]\label{Projector_Pressure}
For each $n\in\mathbb{N}$ there is a linear projector $\Pi^n_M :L^1(\Omega)\rightarrow M^n$ such that for all $s\in (1,\infty)$ there is a constant $c>0$, independent of $n$, such that:
\begin{equation*}
\|\Pi^n_M q\|_{L^{s}(\Omega)} \leq c \|q\|_{L^{s}(\Omega)}\qquad \forall\, q\in L^s(\Omega).
\end{equation*}
\end{assumption}

\noindent
It is not difficult to show that the approximability and stability properties imply that for $s\in [1,\infty)$ we have:
\begin{gather}
\|\bm{\sigma} - \Pi^n_\Sigma\bm{\sigma}\|_{L^{s}(\Omega)} \rightarrow 0\quad \text{ as }n\rightarrow\infty\quad \forall\, \bm{\sigma}\in L_\text{sym}^s(\Omega)^{d\times d},\notag\\
\|\bm{v} - \Pi^n_V\bm{v}\|_{W^{1,s}(\Omega)} \rightarrow 0\quad \text{ as }n\rightarrow\infty\quad \forall \,\bm{v}\in W^{1,s}(\Omega)^{d},\label{Approx_FEMconv}\\
\|q - \Pi^n_Mq\|_{L^{s}(\Omega)} \rightarrow 0\quad \text{ as }n\rightarrow\infty\quad \forall\, q\in L^{s}(\Omega).\notag
\end{gather}
\begin{remark}
A very important consequence of the previous assumptions is the existence, for every $s\in (1,\infty)$, of two positive constants $\beta_s,\gamma_s >0$, independent of $n$, such that the following discrete inf-sup conditions hold:
\begin{gather}
\inf_{q\in M^n_0}\hspace{0.18cm}\sup_{\bm{v}\in V^n} \frac{\int_\Omega q\,\text{div}\,\bm{v}}{\|\bm{v}\|_{W^{1,s}(\Omega)}\|q\|_{L^{s'}(\Omega)}} \geq \beta_s,\label{infsupVel_A}\\
\hspace{0.12cm}\inf_{\bm{v}\in V^n_{\text{div}}}\sup_{\bm{\tau}\in \Sigma^n_\text{sym}}  
\frac{\int_\Omega \bm{\tau}:\bm{D}(\bm{v})}{\|\bm{\tau}\|_{L^{s'}(\Omega)}\|\bm{v}\|_{W^{1,s}(\Omega)}} \geq \gamma_s.\label{infsupStress_A}
\end{gather}
\end{remark}

\begin{example}
There are several pairs of velocity-pressure spaces known to satisfy the stability \cref{Approximability,Projector_Velocity}. They include the conforming Crouzeix--Raviart element, the MINI element, the $\mathbb{P}_2$--$\mathbb{P}_0$ element and the Taylor--Hood element $\mathbb{P}_k$--$\mathbb{P}_{k-1}$ for $k\geq d$ (see \cite{Belenki:2012,Boffi2013,Diening:2013,Girault2003,Crouzeix1973}). In addition to stability, the Scott--Vogelius element also satisfies the property that the discretely divergence-free velocities are pointwise divergence-free (the stability can be guaranteed by assuming for example that the mesh has been barycentrically refined, see \cite{Scott1985}); another example of a velocity-pressure pair with this property is given by the Guzm\'{a}n--Neilan element \cite{Guzman2014,Guzman2014a}. To satisfy Assumption \ref{Projector_Pressure}, one could use the Clément interpolant \cite{Clement1975}.

Sometimes it is easier to prove the inf-sup condition directly. For example, if the space of discrete stresses consists of discontinuous $\mathbb{P}_k$ polynomials (with $k\geq 1$):
\begin{equation*}
\Sigma^n = \{\bm{\sigma}\in L^\infty(\Omega)^{d\times d}\, :\, \bm{\sigma}|_K\in\mathbb{P}_k(K)^{d\times d},\text{ for all }K\in \mathcal{T}_n\},
\end{equation*}
and we have that $\bm{D}(V^n) \subset \Sigma^n$ (e.g.\ we could take the Taylor--Hood element $\mathbb{P}_{k+1}$--$\mathbb{P}_k$ for the velocity and the pressure), then the inf-sup condition follows from the fact that for $s\in (1,\infty)$ there is a constant $c>0$, independent of $h$, such that for any $\bm{\sigma}\in \Sigma^n$ there is $\bm{\tau}\in\Sigma^n$ such that \cite{Sandri1998}:
\begin{equation*}
\int_\Omega \bm{\tau}:\bm{\sigma} = \|\bm{\sigma}\|_{L^s(\Omega)}^s\quad\text{ and   }\quad \|\bm{\tau}\|_{L^{s'}(\Omega)}\leq c\|\bm{\sigma}\|_{L^s(\Omega)}^{s-1}.
\end{equation*}
In case a continuous piecewise polynomial approximation of the stress is preferred, one could use the conforming Crouzeix--Raviart element for the discrete  velocity and pressure and the following space for the stress \cite{Ruas1994} :
\begin{equation*}
\Sigma^n = \{\bm{\sigma}\in C(\overline{\Omega})^{d\times d}\, : \,  \bm{\sigma}|_K\in(\mathbb{P}_1(K)\oplus \mathcal{B})^{d\times d},\text{ for all }K\in \mathcal{T}_n\},
\end{equation*}
where
\begin{equation*}
\mathcal{B} := \text{span}\, \{\lambda_1^2\lambda_2\lambda_3,\lambda_1\lambda_2^2\lambda_3,\lambda_1\lambda_2\lambda_3^2\},
\end{equation*}
and $\{\lambda_j\}_{j=1}^3$ are barycentric coordinates on $K$.
\end{example}
\begin{remark}\label{remark_divFree}
If the discretely divergence-free velocities are in fact exactly divergence free, i.e.\ if $V^n_\text{div}\subset W^{1,r}_{0,\text{div}}(\Omega)^d$, and $\bm{D}(V^n)\subset \Sigma^n$, then the stress-velocity inf-sup condition also holds for the subspace of traceless stresses. Consequently, fewer degrees of freedom are needed to compute the stress unknowns.
\end{remark}
\subsection{Time discretisation}
In this section we will describe the notation that will be used when performing the time discretisation of the problem. Let $\{\tau_m\}_{m\in\mathbb{N}}$ be a sequence of time steps such that $T/\tau_m\in \mathbb{N}$ and $\tau_m \rightarrow 0$, as $m\rightarrow\infty$. For each $m\in \mathbb{N}$ we define the equidistant grid:
\begin{equation*}
\{t^m_j\}_{j=0}^{T/\tau_m},\qquad t_j = t_j^m := j\tau_m.
\end{equation*}
This can be used to define the parabolic cylinders $Q_{i}^j := (t_i,t_j)\times \Omega$, where $0\leq i\leq j \leq T/\tau_m$.
Also, given a set of functions $\{v^j\}_{j=0}^{T/\tau_m}$ belonging to a Banach space $X$, we can define the piecewise constant interpolant $\overline{v}\in L^\infty(0,T;X)$ as:
\begin{equation}\label{pwise_constant_interp}
\overline{v}(t) := v^j,\qquad t\in (t_{j-1},t_j],\qquad j\in \{1,\ldots,T/\tau_m\},
\end{equation}
and the piecewise linear interpolant $\tilde{v}\in C([0,T];X)$ as:
\begin{equation}\label{pwise_linear_interp}
\tilde{v}(t):= \frac{t - t_{j-1}}{\tau_m}v^j + \frac{t_j -t }{\tau_m}v^{j-1},\qquad t \in [t_{j-1},t_j],\quad j \in \{1,\ldots,T/\tau_m\}.
\end{equation}
For a given function $g\in L^p(0,T;X)$, with $p\in [1,\infty)$, we define the  time averages:
\begin{equation}\label{time_averages}
g_j(\cdot) := \frac{1}{\tau_m}\int_{t_{j-1}}^{t_j}g(t,\cdot)\,\text{d}\,t,\quad j\in\{1,\ldots, T/\tau_m\}.
\end{equation}
Then the piecewise constant interpolant $\overline{g}$ defined by \eqref{pwise_constant_interp} satisfies \cite{Roubicek2013}:
\begin{equation}
\|\overline{g}\|_{L^p(0,T;X)} \leq \|g\|_{L^p(0,T;X)},
\end{equation}
and
\begin{equation}\label{conv_time_interpolant}
\overline{g} \rightarrow g \text{ strongly in } L^p(0,T;X), \text{ as }m\rightarrow\infty.
\end{equation}
\section{Weak formulation}
In this section we will present a weak formulation for the problem \eqref{PDE_goal}, where now we assume that $\bm{f}\in L^{r'}(0,T;W^{-1,r'}(\Omega)^d)$, $\bm{u}_0\in L_\text{div}^2(\Omega)^d$ and the graph $\mathcal{A}$ satisfies the assumptions (A1)--(A6) for some $r>\frac{2d}{d+2}$.
Similarly to previous works on the analysis of implicitly constituted fluids, a Lipschitz truncation technique will be required when proving that the limit of the sequence of approximate solutions satisfies the constitutive relation. The theory of Lipschitz truncation for time-dependent problems is not as well developed as in the steady case; here it will be necessary to work locally and the equation plays a vital role (several versions of parabolic Lipschitz truncation have appeared in the literature, see e.g.\ \cite{Diening2010,Bulicek:2012,BREIT2013,Diening2017}). Since the pressure will not be present in the weak formulation, it will be more convenient to use the construction developed in \cite{BREIT2013} because it preserves the solenoidality of the velocity. The following lemma states the main properties of this solenoidal Lipschitz truncation.

\begin{lemma}{\emph{(\cite{BREIT2013,Sueli2018})}}\label{LipschitzTr_unsteady}
Let $p\in (1,\infty)$, $\sigma\in (1,\min(p,p'))$ and let $Q_0 = I_0\times B_0\subset \mathbb{R}\times\mathbb{R}^3$ be a parabolic cylinder, where $I_0$ is an open interval and $B_0$ is an open ball. \textcolor{black}{Denote by $\alpha Q_0$, where $\alpha>0$, the $\alpha$-scaled version of $Q_0$ keeping the barycenter the same.} Suppose $\{\bm{e}^l\}_{l\in\mathbb{N}}$ is a sequence of divergence-free functions that is uniformly bounded in $L^\infty(I_0;L^\sigma(\textcolor{black}{B_0})^d)$ and converges to zero weakly in $L^p(I_0;W^{1,p}(B_0)^d)$ and strongly in $L^\sigma(Q_0)^d$. Let $\{\bm{G}_1^l\}_{l\in\mathbb{N}}$ and $\{\bm{G}_2^l\}_{l\in\mathbb{N}}$ be sequences  that converge to zero weakly in $L^{p'}(Q_0)^{d\times d}$ and strongly in $L^{\sigma}(Q_0)^{d\times d}$, respectively. Define $\bm{G}^l := \bm{G}^l_1 + \bm{G}^l_2$ and suppose that, for any $l\in\mathbb{N}$, the equation
\begin{equation}\label{eq_error_LipTr}
\int_{Q_0} \partial_t\bm{e}^l\cdot \bm{w} = \int_{Q_0} \bm{G}^l:\nabla \bm{w}\quad \forall\, \bm{w}\in C^\infty_{0,\text{\emph{div}}}(Q_0)^d. 
\end{equation}
is satisfied. Then there is a number $j_0\in\mathbb{N}$, a sequence $\{\lambda_{l,j}\}_{l,j\in\mathbb{N}}$ with $2^{2^j}\leq \lambda_{l,j}\leq 2^{2^{j+1}-1}$, a sequence of functions $\{\bm{e}^{l,j}\}_{l,j\in\mathbb{N}}\subset L^1(Q_0)^d$, a sequence of open sets $\mathcal{B}_{\lambda_{l,j}}\subset Q_0$, for $l,j\in \mathbb{N}$, and a function $\zeta\in C^\infty_0(\frac{1}{6}Q_0)$ with $\mathds{1}_{\frac{1}{8}Q_0} \leq \zeta\leq \mathds{1}_{\frac{1}{6}Q_0}$ with the following properties:
\begin{enumerate}[leftmargin = 0.8cm]
\item $\bm{e}^{l,j}\in L^q(\frac{1}{4}I_0;W^{1,q}_{0,\text{\emph{div}}}(\frac{1}{6}B_0)^d)$ for any $q\in [1,\infty)$ and $\text{\emph{supp}}(\bm{e}^{l,j})\subset \frac{1}{6}Q_0$, for any $j\geq j_0$ and any $l\in\mathbb{N}$;
\item $\bm{e}^{l,j} = \bm{e}^j$ on $\frac{1}{8}Q_0\setminus\mathcal{B}_{\lambda_{l,j}}$, for any $j\geq j_0$ and any $l\in\mathbb{N}$;
\item There is a constant $c>0$ such that 
\begin{equation*}
\limsup_{l\rightarrow\infty}\lambda^p_{l,j} |\mathcal{B}_{\lambda_{l,j}}|\leq c 2^{-j},\quad \text{for any }j\geq j_0;
\end{equation*}
\item For $j\geq j_0$ fixed, we have as $l\rightarrow\infty$:
\begin{align*}
\bm{e}^{l,j} &\rightarrow \bm{0}, \quad &\text{strongly in } L^{\infty}(\tfrac{1}{4}Q_0)^{d},\notag\\
\nabla\bm{e}^{l,j} &\rightharpoonup \bm{0}, \quad &\text{weakly in } L^{q}(\tfrac{1}{4}Q_0)^{d\times d},\quad\forall\, q\in[1,\infty);\notag\\
\end{align*}
\item There is a constant $c>0$ such that:
\begin{equation*}
\limsup_{l\rightarrow\infty}\left|\int_{Q_0} \bm{G}^l:\nabla\bm{e}^{l,j} \right| \leq c 2^{-j},\quad \text{for any }j\geq j_0;
\end{equation*}
\item There is a constant $c>0$ such that for any $\bm{H}\in L^{p'}(\frac{1}{6}Q_0)^{d\times d}$:
\begin{equation*}
\limsup_{l\rightarrow\infty}\left|\int_{Q_0} (\bm{G}_1^l + \bm{H}):\nabla\bm{e}^{l,j}\zeta \mathds{1}_{\mathcal{B}^c_{\lambda_{l,j}}} \right| \leq c 2^{-j/p},\quad \text{for any }j\geq j_0.
\end{equation*}
\end{enumerate}
\end{lemma}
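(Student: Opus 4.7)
The plan is to reproduce the solenoidal parabolic Lipschitz truncation of \cite{BREIT2013}, adapted so that the splitting $\bm{G}^l=\bm{G}_1^l+\bm{G}_2^l$ is respected in the final estimate (6). The core idea is to modify $\bm{e}^l$ only on a controllably small \emph{bad set} $\mathcal{B}_{\lambda_{l,j}}\subset Q_0$ so that the resulting $\bm{e}^{l,j}$ is spatially Lipschitz with constant of order $\lambda_{l,j}$, remains divergence-free, and is compactly supported in $\frac{1}{6}Q_0$ via the cutoff $\zeta$. A dyadic (Chebyshev) selection of $\lambda_{l,j}\in [2^{2^j},2^{2^{j+1}-1}]$ combined with a Calder\'on--Zygmund bound on the relevant parabolic maximal function then delivers the small factors $2^{-j}$ appearing in (3), (5), and (6).

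First I would use equation \eqref{eq_error_LipTr} to split $\bm{e}^l=\bm{h}^l+\bm{r}^l$ where $\partial_t \bm{r}^l$ encodes the right-hand side (via a solenoidal Helmholtz projection of $\mathrm{div}\,\bm{G}^l$) and $\bm{h}^l$ is locally stationary in time. Since $\{\bm{G}^l\}$ is bounded in $L^{p'}(Q_0)$, $\bm{r}^l$ enjoys a fractional regularity estimate in time that is uniform in $l$, which enables a parabolic analogue of the classical Lipschitz-truncation maximal estimates. Setting $\mathcal{B}_\lambda := \{\mathcal{M}(|\nabla \bm{e}^l|^p + |\bm{r}^l|^{q_0}) > \lambda^p\}$ for a suitable parabolic maximal function $\mathcal{M}$ and exponent $q_0>1$, a pigeonhole argument within the dyadic range $[2^{2^j},2^{2^{j+1}-1}]$ selects $\lambda_{l,j}$ so that certain boundary integrals on $\partial \mathcal{B}_{\lambda_{l,j}}$ are controlled by $2^{-j}$.

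Next, on $\frac{1}{6}Q_0$ I would extend (the restriction of) $\bm{e}^l$ from $\mathcal{B}_{\lambda_{l,j}}^c$ to a spatially Lipschitz function through a Whitney covering of $\mathcal{B}_{\lambda_{l,j}}$ and an associated partition of unity, multiplied by $\zeta$. Because the naive extension destroys the divergence-free constraint, one applies a local Bogovskii-type corrector on each Whitney cube to absorb the unwanted divergence, yielding a genuinely solenoidal truncation $\bm{e}^{l,j}$ with spatial Lipschitz constant of order $\lambda_{l,j}$. Properties (1)--(2) are then immediate from the construction, (3) follows from the weak-type estimate for $\mathcal{M}$, and (4) follows from the uniform Lipschitz bound together with the strong $L^\sigma$ convergence $\bm{e}^l\to 0$. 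Property (5) is obtained by testing \eqref{eq_error_LipTr} against $\bm{e}^{l,j}$ and splitting the domain into $\mathcal{B}_{\lambda_{l,j}}^c$ (controlled via weak convergence of $\bm{G}^l$ and the uniform Lipschitz bound) and $\mathcal{B}_{\lambda_{l,j}}$ (controlled via (3) and the $L^{p'}$ bound on $\bm{G}^l$).

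The main obstacle is property (6): one must pair the only weakly convergent part $\bm{G}_1^l\rightharpoonup 0$ in $L^{p'}$ against $\nabla \bm{e}^{l,j}\,\zeta\,\mathds{1}_{\mathcal{B}_{\lambda_{l,j}}^c}$, and a naive Lipschitz bound is insufficient because the gradient itself only converges weakly. The refinement is to exploit that, on the good set, the gradient is pointwise comparable to $\nabla \bm{e}^l$, so by the definition of $\mathcal{B}_{\lambda_{l,j}}$ and the dyadic choice of $\lambda_{l,j}$,
\begin{equation*}
\|\nabla \bm{e}^{l,j}\,\zeta\,\mathds{1}_{\mathcal{B}_{\lambda_{l,j}}^c}\|_{L^p(Q_0)}^p \le c \int_{\frac{1}{6}Q_0\setminus \mathcal{B}_{\lambda_{l,j}}}\bigl(|\nabla \bm{e}^l|^p + |\bm{r}^l|^{q_0}\bigr) \le c\, 2^{-j},
\end{equation*}
converting weak $L^{p'}$-information on $\bm{G}_1^l+\bm{H}$ into a quantitative H\"older bound with the rate $2^{-j/p}$. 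Establishing this strong-$L^p$ smallness with the correct exponent is the delicate step that distinguishes this solenoidal parabolic version from the classical Acerbi--Fusco Lipschitz truncation.
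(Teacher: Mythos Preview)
The paper does not prove this lemma at all: it is stated with the attribution \emph{(\cite{BREIT2013,Sueli2018})} and no proof is given. So there is nothing in the paper to compare your argument against, and your outline of the Breit--Diening--Schwarzacher construction (localisation via $\zeta$, bad set defined through a parabolic maximal function, dyadic selection of $\lambda_{l,j}$, Whitney extension with a Bogovski\u{\i} correction to restore solenoidality) is indeed the right skeleton.

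That said, your derivation of property~(6) contains a genuine error. You claim
\[
\bigl\|\nabla \bm{e}^{l,j}\,\zeta\,\mathds{1}_{\mathcal{B}_{\lambda_{l,j}}^c}\bigr\|_{L^p(Q_0)}^p
\;\le\; c \int_{\frac{1}{6}Q_0\setminus \mathcal{B}_{\lambda_{l,j}}}
\bigl(|\nabla \bm{e}^l|^p + |\bm{r}^l|^{q_0}\bigr)
\;\le\; c\,2^{-j},
\]
but the second inequality is false: the integral on the right is over the \emph{good} set, whose measure is comparable to $|Q_0|$, and on it one only knows the pointwise bound $|\nabla\bm{e}^l|\le \lambda_{l,j}$. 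The pigeonhole selection of $\lambda_{l,j}$ gives $\lambda_{l,j}^p\,|\mathcal{B}_{\lambda_{l,j}}|\le c\,2^{-j}$ (this is property~(3)), not smallness of $\int_{\text{good}}|\nabla\bm{e}^l|^p$, which is merely $O(1)$ uniformly in $l$ and $j$. Consequently your H\"older argument cannot produce the rate $2^{-j/p}$.

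The actual route to~(6) is the complementary one: write
\[
\int_{Q_0}(\bm{G}_1^l+\bm{H}):\nabla\bm{e}^{l,j}\,\zeta\,\mathds{1}_{\mathcal{B}_{\lambda_{l,j}}^c}
=\int_{Q_0}(\bm{G}_1^l+\bm{H}):\nabla\bm{e}^{l,j}\,\zeta
-\int_{\mathcal{B}_{\lambda_{l,j}}}(\bm{G}_1^l+\bm{H}):\nabla\bm{e}^{l,j}\,\zeta.
\]
For the bad-set integral, the Lipschitz bound $|\nabla\bm{e}^{l,j}|\le c\,\lambda_{l,j}$ and H\"older give
\[
\Bigl|\int_{\mathcal{B}_{\lambda_{l,j}}}(\bm{G}_1^l+\bm{H}):\nabla\bm{e}^{l,j}\,\zeta\Bigr|
\le c\,\|\bm{G}_1^l+\bm{H}\|_{L^{p'}}\,\lambda_{l,j}\,|\mathcal{B}_{\lambda_{l,j}}|^{1/p}
\le c\,\bigl(\lambda_{l,j}^p|\mathcal{B}_{\lambda_{l,j}}|\bigr)^{1/p}\le c\,2^{-j/p},
\]
which is precisely where the exponent $2^{-j/p}$ comes from. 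For the full integral one splits $\bm{G}_1^l=\bm{G}^l-\bm{G}_2^l$: the $\bm{H}$-term vanishes in the $\limsup$ by property~(4) (weak convergence of $\nabla\bm{e}^{l,j}$ against the fixed function $\bm{H}\zeta$), the $\bm{G}_2^l$-term vanishes since $\bm{G}_2^l\to 0$ strongly in $L^\sigma$ while $\|\nabla\bm{e}^{l,j}\|_{L^{\sigma'}}\le c(j)$ is bounded for fixed $j$, and the $\bm{G}^l$-term is controlled by property~(5). This is the argument in \cite{BREIT2013,Sueli2018}; your version inverts the roles of the good and bad sets and therefore does not close.
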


\subsection{Mixed formulation and time--space discretisation}
Before we present the weak formulation, let us define
\begin{equation*}
\check{r} := \min\left\{\frac{r(d+2)}{2d},r' \right\}.
\end{equation*}
The weak formulation for \eqref{PDE_goal} then reads as follows.

\textbf{Formulation $\check{\text{A}}$.} Find functions
\begin{equation*}
\begin{gathered}
\bm{S}\in L^{r'}_\text{sym}(Q)^{d\times d}\cap  L^{r'}_\text{tr}(Q)^{d\times d}, \\
\bm{u}\in L^r(0,T;W^{1,r}_{0,\text{div}}(\Omega)^d)\cap L^\infty(0,T;L^2_\text{div}(\Omega)^d),\\
\partial_t \bm{u} \in L^{\check{r}}(0,T; (W_{0,\text{div}}^{1,\check{r}'}(\Omega)^d)^*),
\end{gathered}
\end{equation*}
such that
\begin{gather*}
\langle\partial_t \bm{u},\bm{v} \rangle + \int_\Omega (\bm{S} - \bm{u}\otimes\bm{u}):\bm{D}(\bm{v}) = \langle \bm{f},\bm{v}\rangle \qquad \hspace{-0.1cm}\forall\, \bm{v}\in W^{1,\check{r}'}_{0,\text{div}}(\Omega)^d, \text{ a.e. }t\in (0,T),\\
\textcolor{black}{(\bm{D}(\bm{u}),\bm{S})\in \mathcal{A}(\cdot), \text{ a.e. in }  (0,T)\times\Omega,}\\
\esslim_{t\rightarrow 0^+} \|\bm{u}(t,\cdot) - \bm{u}_0(\cdot) \|_{L^2(\Omega)} = 0.
\end{gather*}
\begin{remark}\label{remark_nopressure}
In the formulation above all the test-velocities are divergence-free and as a consequence the presure term vanishes. In this section we will carry out the analysis for the velocity and stress variables only. It is known that even in the Newtonian case (i.e.\ $r=2$) the pressure is only a distribution in time, when working with a no-slip boundary condition (see e.g.\ \cite{Galdi2011}). An integrable pressure can be obtained if Navier's slip boundary condition is used instead \cite{Bulicek:2012}, but in this work we will confine ourselves to the more common no-slip boundary condition.
\end{remark}
\begin{remark}
From \eqref{cont_time_embedding1} we have that 
\begin{equation*}
\bm{u}\in C([0,T];(W_{0,\text{div}}^{1,\check{r}'}(\Omega)^d)^*)\hookrightarrow C_w([0,T];(W_{0,\text{div}}^{1,\check{r}'}(\Omega)^d)^*),
\end{equation*}
and since $\check{r}\leq r'$ we also know that \textcolor{black}{$L^2_\text{div}(\Omega)^d \hookrightarrow (W_{0,\text{div}}^{1,\check{r}'}(\Omega)^d)^*$}. Combined with \eqref{cont_time_embedding2} this yields $\bm{u}\in C_w([0,T];L^2_\text{div}(\Omega)^d)$ and hence the initial condition only makes sense a priori in this weaker sense. However, for this problem it will be proved that it also holds in the stronger sense described above.
\end{remark}

For a given time step $\tau_m$ and $j\in\{1,\ldots,T/\tau_m\}$, let $\bm{f}_j\in W^{-1,r'}(\Omega)^d$ and $\textcolor{black}{\bm{\mathcal{D}}}^k_j:\Omega\times\mathbb{R}^{d\times d}\rightarrow \mathbb{R}^{d\times d}$ be the time averages associated with $\bm{f}$ and $\textcolor{black}{\bm{\mathcal{D}}}^k$, respectively (recall \eqref{time_averages}). The time derivative will be discretised using an implicit Euler scheme; higher order time stepping techniques might not be more advantageous here because higher regularity in time of weak solutions to the problem is not guaranteed a priori. The discrete formulation of the problem can now be introduced.\\

\textbf{Formulation $\check{\text{A}}_{\text{k},\text{n},\text{m},\text{l}}$.} For $j\in\{1,\ldots,T/\tau_m\}$, find functions $\bm{S}_j^{k,n,m,l}\in \Sigma_\text{sym}^n$ and $\bm{u}_j^{k,n,m,l}\in V_\text{div}^n$ such that:
\small
\begin{align*}
\int_\Omega (\textcolor{black}{\bm{\mathcal{D}}}_j^k(\cdot,\bm{S}_j^{k,n,m,l}) - \bm{D}(\bm{u}_j^{k,n,m,l})):\bm{\tau} &= 0 &\forall\, \bm{\tau}\in \Sigma^n_{\text{sym}},\\
\frac{1}{\tau_m}\int_\Omega (\bm{u}_j^{k,n,m,l} - \bm{u}_{j-1}^{k,n,m,l})\cdot \bm{v}  + \frac{1}{l}\int_\Omega |\bm{u}_j^{k,n,m,l}|^{2r'-2}&\bm{u}_j^{k,n,m,l}\cdot\bm{v} & \\ +\int_\Omega   (\bm{S}_j^{k,n,m,l}:\bm{D}(\bm{v})  + \mathcal{B}(\bm{u}_j^{k,n,m,l}&,\bm{u}_j^{k,n,m,l},\bm{v}))  = \langle \bm{f}_j,\bm{v}\rangle &\forall\, \bm{v}\in V^n_\text{div},\\
\bm{u}^{k,n,m,l}_0 = P^n_\text{div}\bm{u}_0. & &
\end{align*}
\normalsize
Here $P^n_\text{div}:L^2(\Omega)^d\rightarrow V^n_\text{div}$ is simply the $L^2$--projection defined through
\begin{equation}
\int_\Omega P^n_\text{div}\bm{v}\cdot\bm{w} = \int_\Omega \bm{v}\cdot\bm{w}\qquad \forall\, \bm{w}\in V^n_\text{div}.
\end{equation}
The form $\mathcal{B}$ is meant to represent the convective term and \textcolor{black}{is defined for functions $\bm{u},\bm{v},\bm{w}\in C^\infty_0(\Omega)^d$ as}:
\begin{equation*}
\renewcommand{\arraystretch}{1.5}
\mathcal{B}(\bm{u},\bm{v},\bm{w}) := \left\{
\begin{array}{cc}
- \displaystyle\int_\Omega\bm{u}\otimes\bm{v}:\bm{D}(\bm{w}), & \textrm{ if } V^n_\textrm{div} \subset W^{1,r}_{0,\textrm{div}}(\Omega)^d,\\
  \displaystyle\frac{1}{2}\int_\Omega  \bm{u}\otimes\bm{w}:\bm{D}(\bm{v})-\bm{u}\otimes\bm{v}:\bm{D}(\bm{w}), & \textrm{ otherwise}.\\
\end{array}
\right.
\end{equation*}
This definition guarantees that $\mathcal{B}(\bm{v},\bm{v},\bm{v})=0$ for every $\bm{v}$ for which this expression is well defined, regardless of whether $\bm{v}$ is pointwise divergence-free or not, which is very useful when obtaining a priori estimates; it reduces to the usual weak form of the convective term whenever the velocities are exactly divergence-free. It is now necessary to check that $\mathcal{B}$ \textcolor{black}{can be continuously extended to} the spaces involving time. By standard function space interpolation, we have that for almost every $t\in(0,T)$:
\small
\begin{align*}
\int_\Omega |\bm{u}(t,\cdot)&\otimes \bm{v}(t,\cdot):\bm{D}(\bm{w}(t,\cdot))| \leq \|\bm{u}(t,\cdot)\|_{L^{2\check{r}}(\Omega)} \| \bm{v}(t,\cdot)\|_{L^{2\check{r}}(\Omega)} \|\bm{D}(\bm{w}(t,\cdot))\|_{L^{\check{r}'}(\Omega)} \\
&\leq \|\bm{u}(t,\cdot)\|_{L^{\frac{r(d+2)}{d}}(\Omega)}\|\bm{v}(t,\cdot)\|_{L^{\frac{r(d+2)}{d}}(\Omega)}\|\bm{D}(\bm{w}(t,\cdot))\|_{L^{\check{r}'}(\Omega)}\\
&\leq c \|\bm{u}(t,\cdot)\|_{W^{1,r}(\Omega)} \|\bm{v}(t,\cdot)\|_{W^{1,r}(\Omega)} \|\bm{w}(t,\cdot)\|_{W^{1,\check{r}'}(\Omega)}.
\end{align*}
\normalsize
As in the steady case (cf.\ \cite{Diening:2013}), a more restrictive condition is needed in order to bound the additional term in $\mathcal{B}$ whenever the elements are not exactly divergence-free. Namely, if we assume that $r\geq \frac{2(d+1)}{d+2}$ (this is the analogue of the condition $r\geq \frac{2d}{d+1}$ in the steady case) then there is a $q\in (1,\infty]$ such that $\frac{1}{r} + \frac{d}{r(d+2)} + \frac{1}{q} = 1$, and therefore
\small
\begin{align*}
\int_\Omega |\bm{u}(t,\cdot)&\otimes \bm{w}(t,\cdot):\bm{D}(\bm{v}(t,\cdot))| \leq \|\bm{u}(t,\cdot)\|_{L^{\frac{r(d+2)}{d}}(\Omega)}\|\bm{D}(\bm{v}(t,\cdot))\|_{L^{r}(\Omega)}\|\bm{w}(t,\cdot)\|_{L^{q}(\Omega)}\\
&\leq c\|\bm{u}(t,\cdot)\|_{W^{1,r}(\Omega)} \|\bm{v}(t,\cdot)\|_{W^{1,r}(\Omega)} \|\bm{w}(t,\cdot)\|_{W^{1,\check{r}'}(\Omega)}.
\end{align*}
\normalsize
\textcolor{black}{On the other hand, using Hölder's inequality we can also obtain the estimate}
\begin{align*}
\textcolor{black}{\|\mathcal{B}(\bm{u},\bm{v},\bm{w})\|_{L^1(0,T)} }&\textcolor{black}{\leq \|\bm{u}\|_{L^{2r'}(Q)} \|\bm{v}\|_{L^{2r'}(Q)}\|\bm{w}\|_{L^r(0,T;W^{1,r}(\Omega))}}\\
	&\textcolor{black}{+ \|\bm{u}\|_{L^{2r'}(Q)} \|\bm{w}\|_{L^{2r'}(Q)}\|\bm{v}\|_{L^r(0,T;W^{1,r}(\Omega))},}
\end{align*}
\textcolor{black}{which means that if the $L^{2r'}(Q)^d$ norm of $\bm{u}$ is finite, then the additional restriction $r\geq \frac{2(d+1)}{d+2}$ is not needed. Moreover, this would also imply that the velocity is an admissible test function, which is useful in the convergence analysis. This motivates the introduction of the penalty term in Formulation $\check{\text{A}}_{\text{k},\text{n},\text{m},\text{l}}$. }
\begin{remark}
While Formulation $\check{\text{A}}_{\text{k},\text{n},\text{m},\text{l}}$ does not contain the pressure, in practice the incompressibility condition is enforced through the addition of a Lagrange multiplier $p^{k,n,m,l}_j\in M_0^n$, which could be thought of as the pressure in the system (the reason for the omission of the pressure in the analysis is explained in \cref{remark_nopressure}). For this reason it is necessary to consider additional assumptions that guarantee inf-sup stability of the spaces $V^n$ and $M^n$ (see \cref{Projector_Velocity,Projector_Pressure}). In case the problem does have an integrable pressure $p$, then it is expected that the sequence of discrete pressures converges to it in $L^1(Q)$.
\end{remark}
\begin{remark}
	\textcolor{black}{Assumption (A5) also implies the existence of a selection $\bm{\mathcal{S}}:Q\times \mathbb{R}^{d\times d}_{sym} \rightarrow \mathbb{R}^{d\times d}_{sym}$ such that $(\bm{\tau},\bm{\mathcal{S}}(z,\bm{\tau}))\in\mathcal{A}(z)$ for all $\bm{\tau}\in \mathbb{R}^{d\times d}_{sym}$, and some models can be written more naturally with a selection of this form; the same analysis \textcolor{black}{as the one} presented in this work can be applied to that situation.}
In fact, in practice it is not necessary to find a selection in order to perform the computations, i.e.\ in the simulations it is possible to work directly with the implicit function $\bm{G}$. When performing the analysis though, the function $\bm{G}$ is not appropriate because many different expressions could lead to the same constitutive relation, but have different mathematical properties.
\end{remark}
\begin{remark}
In this work we did not consider a dual formulation, e.g.\ based on $H(\text{div};\Omega)$, because for the unsteady problem we do not have at our disposal results that guarantee the integrability of $\text{div}\,\bm{S}$. 
\end{remark}
In the next theorem, convergence of the sequence of discrete solutions to a weak solution of the problem is proved. Since the ideas and arguments contained in the proof are similar to the ones presented in the previous sections and follow a similar approach to \cite{Sueli2018}, we will not include here all the details of the calculations unless there is a significant difference.

\begin{theorem}\label{Convergence_unsteady_FormA} Assume that $r>\frac{2d}{d+2}$, let $\{\Sigma^n,V^n,M^n\}_{n\in\mathbb{N}}$ be a family of finite element spaces satisfying Assumptions \ref{Approximability}--\ref{Projector_Velocity}. \textcolor{black}{Then for $k,n,m,l\in \mathbb{N}$ there exists a sequence $\{(\bm{S}^{k,n,m,l}_j,\bm{u}^{k,n,m,l}_j)\}_{j=1}^{T/\tau_m}$ of solutions of Formulation $\check{\text{\emph{A}}}_{\text{\emph{k}},\text{\emph{n}},\text{\emph{m}},\text{\emph{l}}}$, and a couple 
$(\bm{S},\bm{u})\in L^{r'}_\text{\emph{sym}}(Q)^{d\times d}\cap L^{r'}_\text{\emph{tr}}(Q)^{d\times d}\times L^r(0,T;W^{1,r}_{0,\text{\emph{div}}}(\Omega)^d)\cap L^\infty(0,T;L^2_\text{\emph{div}}(\Omega)^d)$} such that the corresponding time interpolants (recall \eqref{pwise_constant_interp} and \eqref{pwise_linear_interp}) $\overline{\bm{u}}^{k,n,m,l}$, $\tilde{\bm{u}}^{k,n,m,l}$ and $\overline{\bm{S}}^{k,n,m,l}$ satisfy (up to a subsequence):
\begin{align}\label{WEakUnstFormA}
\overline{\bm{S}}^{k,n,m,l} &\rightharpoonup \bm{S} \quad &\text{weakly in } L^{r'}(Q)^{d\times d},\notag\\
\overline{\bm{u}}^{k,n,m,l} &\rightharpoonup \bm{u} \quad &\text{weakly in } L^r(0,T;W^{1,r}_0(\Omega)^d),\\
\overline{\bm{u}}^{k,n,m,l},\tilde{\bm{u}}^{k,n,m,l} &\overset{\ast}{\rightharpoonup} \bm{u} \quad &\text{weakly* in } L^\infty(0,T;L^{2}(\Omega)^d),\notag
\end{align} 
and $(\bm{S},\bm{u})$ solves Formulation $\check{\text{\emph{A}}}$, with the limits taken in the order $k\rightarrow \infty$, $(n,m)\rightarrow\infty$ and $l\rightarrow\infty$.
\end{theorem}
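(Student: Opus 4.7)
The proof follows the nested three-level limiting procedure dictated by the statement: first $k\to\infty$ (remove the mollification of the selection), then $(n,m)\to\infty$ (remove the space and time discretizations), and finally $l\to\infty$ (remove the $L^{2r'}$ penalty). At each stage the plan is to (i) derive uniform a priori estimates and extract weakly convergent subsequences; (ii) upgrade weak to strong convergence of the velocity via Simon's compactness lemma; (iii) pass to the limit in the linear and convective terms using the projectors of \cref{Projector_Stress,Projector_Velocity,Projector_Pressure} to build admissible test functions from smooth ones; and (iv) identify the constitutive relation by a monotonicity argument. The baseline energy estimate is obtained by testing the momentum equation with $\bm{v}=\bm{u}_j^{k,n,m,l}\in V^n_{\mathrm{div}}$ and the constitutive equation with $\bm{\tau}=\bm{S}_j^{k,n,m,l}\in\Sigma^n_{\mathrm{sym}}$, subtracting, and exploiting the skew-symmetry $\mathcal{B}(\bm{v},\bm{v},\bm{v})=0$ together with the coercivity \eqref{As_Sel_Aprr2}. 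Telescoping in $j$ yields a bound, uniform in $k,n,m,l$, on $\|\overline{\bm{u}}\|_{L^\infty(L^2)}^2 + \|\overline{\bm{u}}\|_{L^r(W^{1,r})}^r + \|\overline{\bm{S}}\|_{L^{r'}(Q)}^{r'} + l^{-1}\|\overline{\bm{u}}\|_{L^{2r'}(Q)}^{2r'}$, and the momentum equation itself then furnishes a bound on the discrete time derivative in a dual space compatible with Simon's lemma.

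\textbf{First two limits.} For $k\to\infty$ with $n,m,l$ fixed, all quantities live in finite-dimensional spaces, so weak implies strong convergence; the limiting relation $(\bm{D}(\bm{u}_j^{n,m,l}),\bm{S}_j^{n,m,l})\in\mathcal{A}$ is obtained by testing the mollified monotonicity inequality \eqref{As_Sel_Aprr1} with an arbitrary pair $(\bm{D}^*(\cdot,\bm{B}),\bm{B})$, passing to the limit using property \eqref{As_Sel_Aprr3}, and appealing to maximal monotonicity (A3). For $(n,m)\to\infty$ with $l$ fixed, the uniform estimates produce \eqref{WEakUnstFormA}; Simon's lemma combined with the time-shift bound gives strong convergence $\overline{\bm{u}}^{n,m}\to\bm{u}$ in $L^p(0,T;L^p(\Omega)^d)$ for some $p>2$, which takes care of the convective form $\mathcal{B}$. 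Passing to the limit in the momentum equation uses the Approximability \cref{Approximability} applied to a smooth divergence-free test function, after projecting it onto $V^n_{\mathrm{div}}$ via $\Pi^n_V$; \cref{conv_time_interpolant} handles the time averages.

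\textbf{Identification of the constitutive relation and final limit.} The crux of the second limit is to prove $(\bm{D}(\bm{u}),\bm{S})\in\mathcal{A}$ a.e.\ in $Q$. Since the natural test function $\overline{\bm{u}}^{n,m}-\bm{u}$ is neither in $V^n_{\mathrm{div}}$ nor sufficiently regular in time, I would apply the parabolic solenoidal Lipschitz truncation of \cref{LipschitzTr_unsteady} on a localizing cylinder $Q_0\Subset Q$ to $\bm{e}^l:=\overline{\bm{u}}^{n,m}-\bm{u}$, with $\bm{G}^l_1:=\overline{\bm{S}}^{n,m}-\bm{S}$ (the weakly converging part) and $\bm{G}^l_2$ collecting the strongly converging convective and discretization error contributions; assertions (5)--(6) of that lemma then yield
\begin{equation*}
\limsup_{(n,m)\to\infty}\int_{Q_0}(\overline{\bm{S}}^{n,m}-\bm{S}):(\bm{D}(\overline{\bm{u}}^{n,m})-\bm{D}(\bm{u}))\,\zeta\,\mathds{1}_{\mathcal{B}^c_{\lambda_{l,j}}} \leq c\,2^{-j/r},
\end{equation*}
which, via the localized Minty \cref{LocalizedMinty} applied on $(\tfrac{1}{8}Q_0)\setminus\mathcal{B}_{\lambda_{l,j}}$, assertion (3), and a diagonal argument $j\to\infty$, identifies the constitutive relation almost everywhere in $Q_0$; covering $Q$ by such cylinders completes the step. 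The final limit $l\to\infty$ is comparatively routine: the penalty term vanishes because of its $l^{-1}$ prefactor and the uniform $L^{2r'}$ bound, the convective term is controlled using the interpolation estimate derived just before the theorem statement (either via the restriction $r\geq\tfrac{2(d+1)}{d+2}$ or via exact divergence-freeness, as indicated in \cref{remark_divFree}), and the constitutive relation is again identified by the same Lipschitz-truncation/Minty argument.

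\textbf{Initial condition and main obstacle.} The weak-$*$ convergence of $\tilde{\bm{u}}^{k,n,m,l}$ in $L^\infty(0,T;L^2)$ together with $\bm{u}\in C_w([0,T];L^2_{\mathrm{div}}(\Omega)^d)$ arising from \eqref{cont_time_embedding1}--\eqref{cont_time_embedding2}, and $P^n_{\mathrm{div}}\bm{u}_0\to\bm{u}_0$ in $L^2$, yield $\bm{u}(0,\cdot)=\bm{u}_0$ weakly; the strong essential-limit version follows by passing to the limit in the discrete energy inequality to obtain $\limsup_{t\to 0^+}\|\bm{u}(t,\cdot)\|_{L^2}\leq\|\bm{u}_0\|_{L^2}$, which upgrades weak to strong convergence in $L^2$. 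The main obstacle throughout is unquestionably the identification of the constitutive relation in the limit $(n,m)\to\infty$: one must extract a $\limsup$ inequality for $\int\overline{\bm{S}}^{n,m}:\bm{D}(\overline{\bm{u}}^{n,m})$ using only the momentum equation, with no compactness available on either $\bm{D}(\overline{\bm{u}}^{n,m})$ or $\overline{\bm{S}}^{n,m}$, while simultaneously respecting divergence-freeness and localization in time. The careful bookkeeping needed to match the error decompositions of \cref{LipschitzTr_unsteady} with the particular discretization errors produced by the piecewise-constant-in-time interpolants and the projectors $\Pi^n_V,\Pi^n_\Sigma$ is where the bulk of the technical work lies.
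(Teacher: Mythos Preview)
Your overall architecture and the list of tools are right, but you have inverted the roles of the last two limits, and this matters.

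At the stage $(n,m)\to\infty$ with $l$ fixed, the penalty term is still present and gives a uniform bound on $\|\overline{\bm{u}}^{n,m,l}\|_{L^{2r'}(Q)}$. This is precisely what makes the limit velocity $\bm{u}^l$ an admissible test function (after a standard mollification in time), so an energy \emph{equality} holds at level $l$ and the $\limsup$ inequality needed for \cref{LocalizedMinty} follows directly by testing the discrete equation with $\overline{\bm{u}}^{n,m,l}$ and comparing with the limiting energy identity. No Lipschitz truncation is required here, and in fact the one you propose cannot be carried out: \cref{LipschitzTr_unsteady} assumes the sequence $\bm{e}^l$ is \emph{pointwise} divergence-free and that the error equation \eqref{eq_error_LipTr} holds against all $\bm{w}\in C^\infty_{0,\mathrm{div}}(Q_0)^d$, whereas $\overline{\bm{u}}^{n,m,l}$ is only discretely divergence-free and the discrete momentum equation is only valid for $\bm{v}\in V^n_{\mathrm{div}}$. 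Making this work would require a discrete parabolic solenoidal Lipschitz truncation, which is not available (cf.\ \cref{remark_disc_lipsc}).

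Conversely, the step $l\to\infty$ is \emph{not} routine: once the penalty vanishes you lose the $L^{2r'}$ control, so $\bm{u}$ is in general not an admissible test function, there is no energy equality, and the direct Minty argument fails. This is exactly where the paper invokes \cref{LipschitzTr_unsteady}, applied to the continuous-level error $\bm{e}^l:=\bm{u}^l-\bm{u}$ (which \emph{is} divergence-free and \emph{does} satisfy an equation against smooth solenoidal test functions), with $\bm{G}_1^l=\bm{S}-\bm{S}^l$ and $\bm{G}_2^l$ absorbing the convective difference together with the penalty term rewritten in divergence form via an auxiliary elliptic problem. Your sketch of the truncation argument is fine once relocated to this limit; just move it, and replace your ``crux'' paragraph for $(n,m)\to\infty$ by the energy-equality/Minty argument enabled by the penalty.
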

\begin{proof}
The idea of the proof is common in the analysis of nonlinear PDE: we obtain a priori estimates and use compactness arguments to pass to the limit in the equation. In order to prove the existence of solutions of Formulation $\check{\text{A}}_{\text{k},\text{n},\text{m},\text{l}}$, we need to check that given $(\bm{S}^{k,n,m,l}_{j-1},\bm{u}^{k,n,m,l}_{j-1})$, we can find $(\bm{S}^{k,n,m,l}_{j},\bm{u}^{k,n,m,l}_{j})$, for $j\in\{1,\ldots,T/\tau_m\}$. Testing the equation with  $(\bm{S}^{k,n,m,l}_{j},\bm{u}^{k,n,m,l}_{j})$, we see that:
\small
\begin{equation}\label{discrete_testing}
\int_\Omega \textcolor{black}{\bm{\mathcal{D}}}^k(\cdot,\bm{S}^{k,n,m,l}_{j}):\bm{S}^{k,n,m,l}_{j} + \frac{1}{l}\|\bm{u}_j^{k,n,m,l}\|^{2r'}_{L^{2r'}(\Omega)} \leq \langle\bm{f},\bm{u}^{k,n,m,l}_j\rangle + \frac{1}{\tau_m}\int_\Omega\bm{u}^{k,n,m,l}_{j-1}\cdot \bm{u}^{k,n,m,l}_{j}.
\end{equation}
\normalsize
On the other hand, since all norms are equivalent in a finite-dimensional normed linear space, there is a constant $C_n>0$ such that:
\begin{equation}\label{normeq_discrete}
\|\bm{v}\|_{W^{1,r}(\Omega)} \leq C_n \|\bm{v}\|_{L^{2r'}(\Omega)}\qquad \forall\, \bm{v}\in V^n_\text{div}.
\end{equation}
The constant $C_n$ may blow up as $n\rightarrow\infty$, but since $n$ is fixed for now this does not pose a problem. Now, recalling \eqref{As_Sel_Aprr2} and combining \eqref{discrete_testing} and \eqref{normeq_discrete} with a standard corollary of Brouwer's Fixed Point Theorem (cf.\ \cite{Girault1986}) we obtain the existence of solutions of Formulation $\check{\text{A}}_{\text{k},\text{n},\text{m},\text{l}}$. In the first time step (i.e.\ $j=1$), it is essential to use the fact that the projection $P^n_\text{div}$ is stable:
\begin{equation}
\|P^n_\text{div}\bm{u}_0\|_{L^2(\Omega)} \leq \|\bm{u}_0\|_{L^2(\Omega)}.
\end{equation}
The estimate \eqref{normeq_discrete} suffices to guarantee the existence of discrete solutions, but in order to pass to the limit $n\rightarrow\infty$, an estimate that does not degenerate as $n\rightarrow\infty$ is required. This uniform estimate is a consequence of the discrete inf-sup condition \eqref{infsupStress_A}:
\begin{equation}
\gamma_r\|\bm{u}^{k,n,m,l}_{j}\|_{W^{1,r}(\Omega)} \leq \|\textcolor{black}{\bm{\mathcal{D}}}^k(\cdot,\bm{S}^{k,n,m,l}_{j+1})\|_{L^r(\Omega)}.
\end{equation}
Therefore, the following a priori estimate holds:
\small
\begin{align}
&\sup_{j\in\{1,\ldots,T/\tau_m\}}\|\bm{u}^{k,n,m,l}_j\|^2_{L^2(\Omega)} + \sum_{j=1}^{T/\tau_m}\|\bm{u}^{k,n,m,l}_j-\bm{u}^{k,n,m,l}_{j-1}\|^2_{L^2(\Omega)}\notag\\ &+ \tau_m\sum_{j=1}^{T/\tau_m}\|\bm{S}^{k,n,m,l}_j\|_{L^{r'}(\Omega)} + \tau_m\sum_{j=1}^{T/\tau_m}\|\bm{u}^{k,n,m,l}_j\|^r_{W^{1,r}(\Omega)} \label{apriori_unsteady1}\\&+ \sum_{j=1}^{T/\tau_m}\|\textcolor{black}{\bm{\mathcal{D}}}^k(\cdot,\cdot,\bm{S}^{k,n,m,l}_j)\|_{L^r(Q_{j-1}^j)} + \frac{\tau_m}{l}\sum_{j=1}^{T/\tau_m} \|\bm{u}^{k,n,m,l}_j\|^{2r'}_{L^{2r'}(\Omega)} \leq c,\notag
\end{align}
\normalsize
where $c$ is a positive constant that depends on the data; in particular, $c$ is independent of $k,n,m$ and $l$. Let $\overline{\bm{u}}^{k,n,m,l}\in L^\infty(0,T;V^n_\text{div})$ and $\tilde{\bm{u}}^{k,n,m,l}\in C([0,T];V^n_\text{div})$ be the piecewise constant and piecewise linear interpolants defined by the sequence $\{\bm{u}^{k,n,m,l}_j\}_{j=1}^{T/\tau_m}$ (see \eqref{pwise_constant_interp} and \eqref{pwise_linear_interp}) and let $\overline{\bm{S}}^{k,n,m,l}\in L^\infty(0,T;\Sigma^n_\text{sym})$ be the piecewise constant interpolant defined by the sequence $\{\bm{S}^{k,n,m,l}_j\}_{j=1}^{T/\tau_m}$. Furthermore, define also the piecewise constant interpolants:
\begin{equation*}
\overline{\bm{f}}(t,\cdot) := \bm{f}_j(\cdot),\qquad \overline{\textcolor{black}{\bm{\mathcal{D}}}}^{k}(t,\cdot,\cdot) := \textcolor{black}{\bm{\mathcal{D}}}_j^{k}(\cdot,\cdot),\quad t\in (t_{j-1},t_j], \quad j\in\{1,\ldots,T/\tau_m\} 
\end{equation*}
Then the discrete formulation can be rewritten as:
\small
\begin{align*}
\int_\Omega (\overline{\textcolor{black}{\bm{\mathcal{D}}}}^k(t,\cdot,\overline{\bm{S}}^{k,n,m,l}) - \bm{D}(\overline{\bm{u}}^{k,n,m,l})):\bm{\tau} &= 0 &\forall\, \bm{\tau}\in \Sigma^n_{\text{sym}},\\
\int_\Omega \partial_t\tilde{\bm{u}}^{k,n,m,l}\cdot \bm{v}  + \frac{1}{l}\int_\Omega |\overline{\bm{u}}^{k,n,m,l}|^{2r'-2}\overline{\bm{u}}^{k,n,m,l}\cdot\bm{v}& & \\ +\int_\Omega   (\overline{\bm{S}}^{k,n,m,l}:\bm{D}(\bm{v})  + \mathcal{B}(\overline{\bm{u}}^{k,n,m,l}&,\overline{\bm{u}}^{k,n,m,l},\bm{v}))  = \langle \overline{\bm{f}},\bm{v}\rangle &\forall\, \bm{v}\in V^n_\text{div},\\
\tilde{\bm{u}}^{k,n,m,l}(0,\cdot) = P^n_\text{div}\bm{u}_0(\cdot). & &
\end{align*}
\normalsize
The a priori estimate \eqref{apriori_unsteady1} can in turn be written as:
\small
\begin{align}
&\|\overline{\bm{u}}^{k,n,m,l}\|^2_{L^\infty(0,T;L^2(\Omega))} + \tau_m\|\partial_t \tilde{\bm{u}}^{k,n,m,l}\|^2_{L^2(Q)}+ \|\overline{\bm{S}}^{k,n,m,l}\|^{r'}_{L^{r'}(Q)} \label{apriori1_bis_unsteady}\\
&+\|\overline{\bm{u}}^{k,n,m,l}\|^r_{L^r(0,T;W^{1,r}(\Omega))} + \|\textcolor{black}{\bm{\mathcal{D}}}^k(\cdot,\cdot,\overline{\bm{S}}^{k,n,m,l})\|^r_{L^r(Q)} + \frac{1}{l}\|\overline{\bm{u}}^{k,n,m,l}\|^{2r'}_{L^{2r'}(Q)}\leq c.\notag
\end{align}
\normalsize
Using the equivalence of norms in finite-dimensional spaces we also obtain
\begin{equation*}
\|\partial_t\tilde{\bm{u}}^{k,n,m,l}\|_{L^\infty(0,T;L^2(\Omega))} \leq c(n)\|\partial_t\tilde{\bm{u}}^{k,n,m,l}\|_{L^2(Q)}, 
\end{equation*}
and together with the a priori estimate this implies that
\begin{equation}
\|\tilde{\bm{u}}^{k,n,m,l}\|_{W^{1,\infty}(0,T;L^2(\Omega))} \leq c(n,m).
\end{equation}
Therefore, up to subsequences, as $k\rightarrow\infty$ we have:
\small
\begin{align}
\overline{\bm{u}}^{k,n,m,l} &\rightarrow \overline{\bm{u}}^{n,m,l}\qquad &\text{strongly in }L^\infty(0,T;L^2(\Omega)^d),\notag\\
\tilde{\bm{u}}^{k,n,m,l} &\rightarrow \tilde{\bm{u}}^{n,m,l}\qquad &\text{strongly in }W^{1,\infty}(0,T;L^2(\Omega)^d),\notag \\
\overline{\bm{u}}^{k,n,m,l} &\rightarrow \overline{\bm{u}}^{n,m,l}\qquad &\text{strongly in }L^{2r'}(Q)^d,\notag\\
\overline{\bm{u}}^{k,n,m,l} &\rightarrow \overline{\bm{u}}^{n,m,l}\qquad &\text{strongly in }L^r(0,T;W^{1,r}_0(\Omega)^d),\notag\\
\overline{\bm{S}}^{k,n,m,l} &\rightarrow \overline{\bm{S}}^{n,m,l}\qquad &\text{strongly in }L^{r'}(Q)^{d\times d},\notag\\
\textcolor{black}{\bm{\mathcal{D}}}^k(\cdot,\cdot,\overline{\bm{S}}^{k,n,m,l}) &\rightharpoonup \bm{D}^{n,m,l}
\qquad &\text{weakly in }L^r(Q)^{d\times d},\notag\\
\overline{\textcolor{black}{\bm{\mathcal{D}}}}^k(\cdot,\cdot,\overline{\bm{S}}^{k,n,m,l}) &\rightharpoonup \overline{\bm{D}}^{n,m,l}
\qquad &\text{weakly in }L^r(Q)^{d\times d},\notag\\
\textcolor{black}{\bm{\mathcal{D}}}^k_j(\cdot,\bm{S}_j^{k,n,m,l}) &\rightharpoonup \bm{D}_j^{n,m,l}
\qquad &\text{weakly in }L^r(\Omega)^{d\times d},\,\text{for }j\in\{1,\ldots,T/\tau_m\}.\notag
\end{align}
\normalsize
Since the function $\bm{D}^k_j$ is simply an average in time, the uniqueness of the weak limit implies that
\begin{equation}\label{pwise_D_conv}
\bm{D}^{n,m,l}_j(\cdot) = \frac{1}{\tau_m}\int_{t_{j-1}}^{t_j}\bm{D}^{n,m,l}(t,\cdot)\,\text{d}t,\qquad j\in \{1,\ldots,T/\tau_m\},
\end{equation}
and that $\overline{\bm{D}}^{n,m,l}$ is the piecewise constant interpolant determined by the sequence $\{\bm{D}^{n,m,l}_j\}_{j=1}^{T/\tau_m}$. Moreover, since the convergence of the velocity and stress sequences is strong, it is straightforward to pass to the limit $k\rightarrow\infty$ and thus we obtain
\small
\begin{align*}
\int_\Omega (\overline{\bm{D}}^{n,m,l} - \bm{D}(\overline{\bm{u}}^{n,m,l})):\bm{\tau} &= 0 &\forall\, \bm{\tau}\in \Sigma^n_{\text{sym}},\\
\int_\Omega \partial_t\tilde{\bm{u}}^{n,m,l}\cdot \bm{v}  + \frac{1}{l}\int_\Omega |\overline{\bm{u}}^{n,m,l}|^{2r'-2}\,\overline{\bm{u}}^{n,m,l}&\cdot\bm{v} & \\ +\int_\Omega   (\overline{\bm{S}}^{n,m,l}:\bm{D}(\bm{v})  + \mathcal{B}(\overline{\bm{u}}^{n,m,l} &,\overline{\bm{u}}^{n,m,l},\bm{v}))  = \langle \overline{\bm{f}},\bm{v}\rangle & \forall\, \bm{v}\in V^n_\text{div}.
\end{align*}
\normalsize
It is also clear that the initial condition $\tilde{\bm{u}}^{n,m,l}(0,\cdot) = P^n_\text{div}\bm{u}_0(\cdot)$ holds, since the expression on the right-hand side is independent of $k$. The identification of the constitutive relation can be carried out using \eqref{As_Sel_Aprr3} in exactly the same manner as in \cite{Sueli2018}, which means that (the strong convergence is again essential):
\begin{equation}
\textcolor{black}{(\bm{D}^{n,m,l},\overline{\bm{S}}^{n,m,l})\in \mathcal{A}(\cdot), \text{ a.e. in } (0,T)\times\Omega.}
\end{equation}
The next step is to take the limit in both the time and space discretisations simultaneously. The weak lower semicontinuity of the norms and the estimate \eqref{apriori1_bis_unsteady} imply that:
\begin{align}
&\|\overline{\bm{u}}^{n,m,l}\|^2_{L^\infty(0,T;L^2(\Omega))} + \tau_m\|\partial_t \tilde{\bm{u}}^{n,m,l}\|^2_{L^2(Q)}+ \|\overline{\bm{S}}^{n,m,l}\|^{r'}_{L^{r'}(Q)} \label{apriori2_unsteady}\\
&+\|\overline{\bm{u}}^{n,m,l}\|^r_{L^r(0,T;W^{1,r}(\Omega))} + \|\bm{D}^{n,m,l}\|^r_{L^r(Q)} + \frac{1}{l}\|\overline{\bm{u}}^{n,m,l}\|^{2r'}_{L^{2r'}(Q)}\leq c,\notag
\end{align}
and
\begin{equation}
\|\tilde{\bm{u}}^{n,m,l}\|^2_{L^\infty(0,T;L^2(\Omega))} = \|\overline{\bm{u}}^{n,m,l}\|^2_{L^\infty(0,T;L^2(\Omega))} \leq c,
\end{equation}
where $c$ is a constant, independent of $n,m$ and $l$. Consequently, there exist (not relabelled) subsequences such that, as $n,m\rightarrow\infty$:
\small
\begin{align}
\overline{\bm{u}}^{n,m,l} &\overset{\ast}{\rightharpoonup} \bm{u}^{l}\qquad &\text{weakly* in }L^\infty(0,T;L^2(\Omega)^d),\notag\\
\tilde{\bm{u}}^{n,m,l} &\overset{\ast}{\rightharpoonup} \bm{u}^{l}\qquad &\text{weakly* in }L^\infty(0,T;L^2(\Omega)^d),\notag\\
\overline{\bm{u}}^{n,m,l} &\rightharpoonup \bm{u}^{l}\qquad &\text{weakly in }L^r(0,T;W^{1,r}_0(\Omega)^d),\notag\\
\overline{\bm{S}}^{n,m,l} &\rightharpoonup \bm{S}^{l}\qquad &\text{weakly in }L^{r'}(Q)^{d\times d},\notag\\
\bm{D}^{n,m,l} &\rightharpoonup \bm{D}^{l}
\qquad &\text{weakly in }L^r(Q)^{d\times d},\notag\\
\overline{\bm{D}}^{n,m,l} &\rightharpoonup \overline{\bm{D}}^{l}
\qquad &\text{weakly in }L^r(Q)^{d\times d},\notag\\
\frac{1}{l}\int_Q |\overline{\bm{u}}^{n,m,l}|^{2r'-2}\overline{\bm{u}}^{n,m,l} &\rightharpoonup \frac{1}{l}\int_Q |\bm{u}^{l}|^{2r'-2}\bm{u}^{n,m,l}
\qquad &\text{weakly in }L^{(2r')'}(Q)^{d}.\notag
\end{align}
\normalsize
At this point it is a standard step to use the Aubin--Lions lemma to obtain strong convergence of subsequences. However, following \cite{Sueli2018}, we will instead use Simon's compactness lemma; this choice is made to avoid the need for stability estimates of $P^n_\text{div}$ in Sobolev norms, which would require additional assumptions on the mesh. To apply this lemma, it will be more convenient to work with the modified interpolant:
\begin{equation*}
\renewcommand{\arraystretch}{1.5}
\hat{\bm{u}}^{n,m,l}(t,\cdot) := \left\{
\begin{array}{cc}
\bm{u}^{n,m,l}_1(\cdot), & \textrm{ if } t\in[0,t_1),\\
\tilde{\bm{u}}^{n,m,l}(t,\cdot), & \textrm{ if }t\in [t_1,T].\\
\end{array}
\right.
\end{equation*}
Let $\epsilon >0$ be such that $s+\epsilon < T$ and let $\bm{v}\in V^n_\text{div}$. Then, using the definition of $\hat{\bm{u}}^{n,m,l}$ we have
\small
\begin{align*}
&\int_\Omega (\hat{\bm{u}}^{n,m,l}(s+\epsilon,x) - \hat{\bm{u}}^{n,m,l}(s+\epsilon,x))\cdot \bm{v}(x)\,\text{d}x\\& = \int_{\max(s,\tau_m)}^{s+\epsilon}\int_\Omega \partial_t\hat{\bm{u}}^{n,m,l}(t,x)\cdot\bm{v}(x)\,\text{d}x\,\text{d}t\\
&=\int_{\max(s,\tau_m)}^{s+\epsilon}\int_\Omega \partial_t\tilde{\bm{u}}^{n,m,l}(t,x)\cdot\bm{v}(x)\,\text{d}x\,\text{d}t \\&= \int_{\max(s,\tau_m)}^{s+\epsilon}\left(-\frac{1}{l}\int_\Omega |\overline{\bm{u}}^{n,m,l}(t,x)|^{2r'-2}\overline{\bm{u}}^{n,m,l}(t,x)\cdot\bm{v}(x)\,\text{d}x\right. \\&\left. -\int_\Omega   (\overline{\bm{S}}^{n,m,l}(t,x):\bm{D}(\bm{v}(x))  + \mathcal{B}(\overline{\bm{u}}^{n,m,l}(t,x),\overline{\bm{u}}^{n,m,l}(t,x),\bm{v}(x)))\, \text{d}x  + \langle \overline{\bm{f}}(t),\bm{v}\rangle \right)\,\text{d}t\\
&\leq c(l) \left(\left(\int_{\max(s,\tau_m)}^{s+\epsilon} \|\bm{v}\|^r_{W^{1,r}(\Omega)}\,\text{d}t  \right)^{1/r} +  \left(\int_{\max(s,\tau_m)}^{s+\epsilon} \|\bm{v}\|^{2r'}_{L^{2r'}(\Omega)}  \,\text{d}t\right)^{1/2r'} \right)\\
&\leq c(l)(\epsilon^{1/r} + \epsilon^{1/2r'})\left(\|\bm{v}\|_{W^{1,r}(\Omega)} + \|\bm{v}\|_{L^{2r'}(\Omega)}\right).
\end{align*}
\normalsize
Choosing $\bm{v} = \hat{\bm{u}}^{n,m,l}(s+\epsilon,\cdot)-\hat{\bm{u}}^{n,m,l}(s,\cdot)$ we conclude that
\begin{equation*}
\int_0^{T-\epsilon} \|\hat{\bm{u}}^{n,m,l}(s+\epsilon,\cdot)-\hat{\bm{u}}^{n,m,l}(s,\cdot)\|^2_{L^2(\Omega)}\,\text{d}s \rightarrow 0,\text{ as }\epsilon\rightarrow 0.
\end{equation*}
On the other hand, the a priori estimates imply that $\hat{\bm{u}}^{n,m,l}$ is bounded (uniformly in $n,m\in\mathbb{N}$) in $L^2(Q)^d$ and $L^1(0,T;W^{1,r}_0(\Omega)^d)$. Moreover, since $r>\frac{2d}{d+2}$, the embedding $W^{1,r}(\Omega)^d \hookrightarrow L^2(\Omega)^d$ is compact and thus Simon's compactness lemma guarantees the strong convergence:
\begin{equation}
\hat{\bm{u}}^{n,m,l} \rightarrow \bm{u}^{l}\qquad \text{strongly in }L^2(Q)^d.
\end{equation}
Since the interpolants converge to the same limit as $\tau_m\rightarrow 0$, using standard function space interpolation (and recalling \eqref{parabolic_embedding_r}) we also obtain that, as $n,m\rightarrow\infty$:
\begin{align}
\tilde{\bm{u}}^{n,m,l} &\rightarrow \bm{u}^{l}\qquad &\text{strongly in }L^p(0,T;L^{2}(\Omega)^d),\label{strong_unsteady_tilde}\\
\overline{\bm{u}}^{n,m,l} &\rightarrow \bm{u}^{l}\qquad &\text{strongly in }L^p(0,T;L^{2}(\Omega)^d)\cap L^{q}(Q),\label{strong_unsteady}
\end{align}
for $p\in [1,\infty)$ and $q\in [1,\max(2r',\frac{q(d+2)}{d}))$.

Now, using the property \eqref{Approx_FEMconv}, we can check that $\bm{u}^l$ is actually divergence-free:
\small
\begin{equation}\label{limit_divfree}
0 = \int_0^T\int_\Omega \phi\,\Pi^n_M q\,\text{div}\overline{\bm{u}}^{n,m,l} \rightarrow \int_0^T\int_\Omega \phi\, q\,\text{div}\bm{u}^l\quad\forall\, q\in L^{r'}(\Omega),\,\phi\in C^\infty_0(0,T).
\end{equation}
\normalsize
Furthermore, \eqref{Approx_FEMconv} also yields convergence of the initial condition, as $n,m\rightarrow\infty$:
\begin{equation}
\tilde{\bm{u}}^{n,m,l}(0,\cdot) = P^n_\text{div}\bm{u}_0 \rightarrow \bm{u}_{0}\qquad \text{strongly in }L^2(\Omega)^d.
\end{equation}
The functions $\bm{D}^{l}$ and $\overline{\bm{D}}^l$ can easily be identified using the property \eqref{conv_time_interpolant} and the definition of the piecewise constant interpolant \eqref{pwise_D_conv}. Indeed, for an arbitrary $\bm{\sigma}\in C^\infty_0(Q)$ we have, as $n,m\rightarrow\infty$:
\begin{equation}
\int_0^T\int_\Omega \overline{\bm{D}}^{n,m,l}:\bm{\sigma} = \int_0^T\int_\Omega \bm{D}^{n,m,l}:\overline{\bm{\sigma}} \rightarrow \int_0^T\int_\Omega \bm{D}^l:\bm{\sigma}.
\end{equation}
The uniqueness of the weak limit then implies that $\bm{D}^l=\overline{\bm{D}}^l$.

Combining all these properties and using an analogous computation to \eqref{limit_divfree} it is possible to prove that the limiting functions are a solution of the following problem:
\small
\begin{gather*}
\hspace{0.7cm}\int_0^T\int_\Omega (\bm{D}^{l} - \bm{D}(\bm{u}^{l})):\bm{\tau}\,\varphi = 0 \hspace{2cm}\forall\, \bm{\tau}\in C^\infty_{0,\text{sym}}(\Omega)^{d\times d},\,\varphi\in C^\infty_0(0,T),\\
\hspace{-2.6cm}-\int_0^T\int_\Omega \bm{u}^{l}\cdot \bm{v}\,\partial_t\varphi - \int_\Omega \bm{u}_0\cdot \bm{v}\varphi(0)  +  \int_0^T\int_\Omega   (\bm{S}^{l} - \bm{u}^l\otimes \bm{u}^l):\bm{D}(\bm{v})\,\varphi\\ \hspace{0.6cm} +\frac{1}{l}\int_0^T\int_\Omega |\bm{u}^{l}|^{2r'-2}\bm{u}^{l}\cdot\bm{v}\,\varphi  = \int_0^T\langle \bm{f},\bm{v}\rangle\,\varphi \hspace{0.8cm} \forall\, \bm{v}\in C^\infty_{0,\text{div}}(\Omega)^d,\,\varphi\in C^\infty_0(-T,T).
\end{gather*}
\normalsize
From the equation above and the \textcolor{black}{estimate} \eqref{parabolic_embedding_r} we then see that the distributional time derivative belongs to the spaces:
\begin{gather}
\partial_t \bm{u}^l \in L^{\min(r',(2r')')}(0,T;(W^{1,r}_{0,\text{div}}(\Omega)^d\cap L^{2r'}(\Omega)^d)^*),\label{time_der_space0}\\
\partial_t \bm{u}^l \in L^{\min(\check{r},(2r')')}(0,T;(W^{1,\check{r}'}_{0,\text{div}}(\Omega)^d)^*).\label{time_der_space}
\end{gather}
It is important to note that \eqref{time_der_space} holds uniformly in $l\in\mathbb{N}$, while \eqref{time_der_space0} does not. Now, observe that
\small
\begin{equation*}
W^{1,r}_{0,\text{div}}(\Omega)^d\cap L^{2r'}(\Omega)^d\hookrightarrow L^2_\text{div}(\Omega)^d  \hookrightarrow (L^2_\text{div}(\Omega)^d)^* \hookrightarrow (W^{1,r}_{0,\text{div}}(\Omega)^d\cap L^{2r'}(\Omega)^d)^*.
\end{equation*}
\normalsize
Combining this with \eqref{cont_time_embedding1}, \eqref{cont_time_embedding2}, and the fact that $\bm{u}^l\in L^\infty(0,T;L^2_\text{div}(\Omega)^d)$ guarantees that $\bm{u}^l\in C_w([0,T],L^2_\text{div}(\Omega)^d)$. Let $\bm{v}\in C^\infty_{0,\text{div}}(\Omega)^d$ and $\varphi\in C^\infty(-T,T)$ be such that $\varphi(0)=1$; then the following equality holds:
\begin{equation}\label{in_cond1}
\int_0^T\int_\Omega \partial_t(\bm{u}^l\varphi)\cdot \bm{v} = -\int_\Omega \bm{u}^{l}(0,\cdot)\cdot\bm{v}\,\varphi(0). 
\end{equation}
On the other hand, using the equation we also have that:
\small
\begin{equation}\label{in_cond2}
\int_0^T\int_\Omega \partial_t(\bm{u}^l\varphi)\cdot \bm{v} = \int_0^T\int_\Omega \partial_t\bm{u}^l\cdot \bm{v}\,\varphi +  \int_0^T\int_\Omega \bm{u}^l\cdot \bm{v}\,\partial_t\varphi = -\int_\Omega \bm{u}_{0}\cdot\bm{v}\,\varphi(0).
\end{equation}
\normalsize
Comparing \eqref{in_cond1} and \eqref{in_cond2} we conclude that $\bm{u}^l(0,\cdot) = \bm{u}_0(\cdot)$. This proves that the initial condition is attained in the weak sense expected a priori from the embeddings; however, in this case the stronger condition
\begin{equation}\label{stong_in_cond1}
\esslim_{t\rightarrow 0^+} \, \|\bm{u}^l(t,\cdot) - \bm{u}_0(\cdot) \|_{L^2(\Omega)} = 0
\end{equation}
holds. To see this, note that \eqref{strong_unsteady_tilde} guarantees that, up to a subsequence, $\tilde{\bm{u}}^{n,m,l}(t,\cdot)\rightarrow \tilde{\bm{u}}^{l}(t,\cdot)$ in $L^2(\Omega)^d$ for almost every $t\in [0,T]$, and therefore
\small
\begin{align*}
&\|\bm{u}^l(t,\cdot) - \bm{u}_0(\cdot) \|^2_{L^2(\Omega)} = \displaystyle\limsup_{n,m\rightarrow\infty} \|\tilde{\bm{u}}^{n,m,l}(t,\cdot) - \tilde{\bm{u}}^{n,m,l}(0,\cdot) \|^2_{L^2(\Omega)} \\
&= \limsup_{n,m\rightarrow\infty}\left(\|\tilde{\bm{u}}^{n,m,l}(t,\cdot)\|^2_{L^2(\Omega)} - \|\tilde{\bm{u}}^{n,m,l}(0,\cdot) \|^2_{L^2(\Omega)}\right.\\&\qquad \left. + 2\int_\Omega (\tilde{\bm{u}}^{n,m,l}(0,\cdot) - \tilde{\bm{u}}^{n,m,l}(t,\cdot))\cdot \tilde{\bm{u}}^{n,m,l}(0,\cdot)\right)\\
&\leq \limsup_{n,m\rightarrow\infty}\left(\int_0^t \langle \overline{\bm{f}},\overline{\bm{u}}^{n,m,l}\rangle + 2\int_\Omega (\tilde{\bm{u}}^{n,m,l}(0,\cdot) - \tilde{\bm{u}}^{n,m,l}(t,\cdot))\cdot \tilde{\bm{u}}^{n,m,l}(0,\cdot)\right)\\
&\leq  \int_0^t \langle \bm{f},\bm{u}^{l}\rangle+2\int_\Omega (\bm{u}^{l}(0,\cdot) - \bm{u}^{l}(t,\cdot))\cdot \bm{u}^{l}(0,\cdot),
\end{align*}
\normalsize
for almost every $t\in [0,T]$. Observe also that the monotonicity of the constitutive relation was used to obtain the next to last inequality. Taking the limit $t\rightarrow 0^+$ then yields \eqref{stong_in_cond1}.

The identification of the constitutive relation, \textcolor{black}{i.e.\ proving that $(\bm{D}^l,\bm{S}^l)\in\mathcal{A}(\cdot)$ almost everywhere}, can be carried out with the help of Lemma \ref{LocalizedMinty}. In order to apply the lemma, the only thing that remains to be proved, since we already know that $(\bm{D}^{n,m,l},\overline{\bm{S}}^{n,m,l})\in \mathcal{A}(\cdot)$ almost everywhere, is that:
\begin{equation}
\limsup_{n,m\rightarrow\infty}\int_0^t\int_\Omega \overline{\bm{S}}^{n,m,l}:\bm{D}^{n,m,l} \leq \int_0^t\int_\Omega \bm{S}^l:\bm{D}^l,
\end{equation}
for almost every $t\in[0,T]$; then taking $t\rightarrow T$ we obtain the result in the whole domain $Q$. The proof of this fact is essentially the  same as in \cite{Sueli2018} and we will not reproduce it here. 
Moreover, the following energy identity holds: 
\small
\begin{equation}\label{energy_equality}
\frac{1}{2}\|\bm{u}^l(t,\cdot)\|^2_{L^2(\Omega)} + \int_0^t\int_\Omega\bm{S}^l:\bm{D}(\bm{u}^l) + \frac{1}{l}\int_0^t\|\bm{u}^l\|^{2r'}_{L^{2r'}(\Omega)} = \int_0^t \langle  \bm{f},\bm{u}^l\rangle + \|\bm{u}_0\|^2_{L^2(\Omega)},
\end{equation}
\normalsize
In time-dependent problems obtaining an energy identity of this kind is not always possible; in this case the energy equality \eqref{energy_equality} can be proved, since the velocity is an admissible test function in space thanks to the fact that its $L^{2r'}$ norm is under control (some mollification is needed to overcome the low integrability in time, see \cite{Tscherpel2018,Lions1969}).

Now, \eqref{apriori2_unsteady} and the weak and weak* lower semicontinuity of the norms imply that
\small
\begin{equation}\label{apriori3_unsteady}
\|\bm{u}^{l}\|^2_{L^\infty(0,T;L^2(\Omega))} + \|\bm{S}^{l}\|^{r'}_{L^{r'}(Q)} +\|\bm{u}^{l}\|^r_{L^r(0,T;W^{1,r}(\Omega))} + \|\bm{D}^{l}\|^r_{L^r(Q)} + \frac{1}{l}\|\bm{u}^{l}\|^{2r'}_{L^{2r'}(Q)}\leq c,
\end{equation}
\normalsize
where $c$ is a constant independent of $l$. From this we see that, up to subsequences, as $l\rightarrow\infty$:
\begin{align}
\bm{u}^{l} &\overset{\ast}{\rightharpoonup} \bm{u}\qquad &\text{weakly* in }L^\infty(0,T;L^2(\Omega)^d),\notag\\
\bm{u}^{l} &\rightharpoonup \bm{u}\qquad &\text{weakly in }L^r(0,T;W^{1,r}_0(\Omega)^d),\notag\\
\bm{S}^{l} &\rightharpoonup \bm{S}\qquad &\text{weakly in }L^{r'}(Q)^{d\times d},\label{converg_3_unsteady}\\
\bm{D}^l &\rightharpoonup \bm{D}
\qquad &\text{weakly in }L^r(Q)^{d\times d},\notag\\
\frac{1}{l}\int_{\textcolor{black}{Q}} |\bm{u}^{l}|^{2r'-2}\bm{u}^{l} &\rightarrow 0
\qquad &\text{strongly in }L^{1}(Q)^{d}.\notag
\end{align}
Furthermore, since $\check{r}\leq r'$ and $r>\frac{2d}{d+2}$, the embedding $W^{1,\check{r}'}_{0,\text{div}}(\Omega)^d \hookrightarrow L^2_\text{div}(\Omega)^d$ is compact and hence by the Aubin--Lions lemma (taking into account \eqref{time_der_space}) we have the strong convergence:
\begin{equation}\label{converg_3_unsteady_strong}
\bm{u}^{l} \rightarrow \bm{u}\qquad \text{strongly in }L^r(0,T;L^{2}_{\text{div}}(\Omega)^d).
\end{equation}
With the convergence properties \eqref{converg_3_unsteady} and \eqref{converg_3_unsteady_strong} it is then possible to pass to the limit and prove that the limiting functions satisfy:
\small
\begin{align*}
\int_\Omega (\bm{D} - \bm{D}(\bm{u})):\bm{\tau} &= 0 &\forall\, \bm{\tau}\in C^\infty_{0,\text{sym}}(\Omega)^{d\times d},\text{ a.e. }t\in (0,T),\\
\langle\partial_t \bm{u},\bm{v} \rangle + \int_\Omega (\bm{S} - \bm{u}\otimes\bm{u}):\bm{D}(\bm{v}) &= \langle \bm{f},\bm{v}\rangle &\forall\, \bm{v}\in C^\infty_{0,\text{div}}(\Omega)^d, \text{ a.e. }t\in (0,T).
\end{align*}
\normalsize
The same argument used to obtain \eqref{stong_in_cond1} can be used here to prove that the initial condition is attained in the strong sense:
\begin{equation}\label{stong_in_cond2}
\esslim_{t\rightarrow 0^+} \, \|\bm{u}(t,\cdot) - \bm{u}_0(\cdot) \|_{L^2(\Omega)} = 0.
\end{equation}
Moreover, since the penalty term vanishes in the limit $l\rightarrow\infty$, we can improve the integrability in time:
\begin{equation}
\partial_t \bm{u}^l \in L^{\check{r}}(0,T;(W^{1,\check{r}'}_{0,\text{div}}(\Omega)^d)^*).
\end{equation}
To show that $(\bm{D},\bm{S})\in\mathcal{A}(\cdot)$, \cref{LocalizedMinty} will once again be employed. The main difficulty at this stage, just like in the previous works \cite{Diening:2013,Sueli2018}, is that the velocity is no longer an admissible test function (and therefore we do not have an energy equality similar to \eqref{energy_equality}). The idea is now to work with Lipschitz truncations of the error $\bm{e}^l := \bm{u}^l - \bm{u}$; it should be noted however that in the present case we need to verify a number of additional hypotheses before \cref{LipschitzTr_unsteady} can be applied.

Note that equation \eqref{eq_error_LipTr} in \cref{LipschitzTr_unsteady} is written in divergence form. We then need to make a preliminary step and write the penalty term in this form (see \cite{Sueli2018}). Let $B_0\subset\subset \Omega$ be an arbitrary ball compactly contained in $\Omega$ and let $q\in[1,(2r')')$. Then from the standard theory of elliptic operators we know that for almost every $t\in[0,T]$ there is a unique $\bm{g}_3^l(t,\cdot)\in W^{2,q}(B_0)^d\cap W^{1,q}_0(B_0)$ such that:
\begin{gather}
\int_{B_0} \nabla \bm{g}^l_3(t,\cdot):\nabla\bm{v} = \frac{1}{l} \int_{B_0} |\bm{u}^l(t,\cdot)|^{2r'-2}\bm{u}^l(t,\cdot)\cdot\bm{v}\qquad \forall\,\bm{v}\in C^\infty_{0,\text{div}}(\Omega)^d,\notag\\
\|\bm{g}^l_3(t,\cdot)\|_{W^{2,q}(B_0)} \leq c \left\|\frac{1}{l}|\bm{u}^l(t,\cdot)|^{2r'-2}\bm{u}^l(t,\cdot) \right\|_{L^q(B_0)}.\notag
\end{gather}
This means in particular (by \eqref{converg_3_unsteady} and standard function space interpolation) that for a fixed time interval $I_0\subset\subset (0,T)$ we have:
\begin{equation}\label{converg_pen_term}
\bm{g}^{l}_3 \rightarrow \bm{0}\qquad \text{strongly in }L^q(I_0;W^{1,q}(B_0)^d),\quad \forall\, q\in [1,(2r')').
\end{equation}
Defining $Q_0:= I_0\times B_0$ and
\begin{gather}
\textcolor{black}{\bm{G}^l_1 := \bm{S}^l - \bm{S}},\notag\\
\bm{G}^l_2 := \bm{u}^l\otimes\bm{u}^l - \bm{u}\otimes\bm{u} -\nabla\bm{g}^l_3,\notag
\end{gather}
we readily see that the error $\bm{e}^l$ satisfies the equation
\begin{equation}
\int_{Q_0} \partial_t\bm{e}^l\cdot \bm{w} = \int_{Q_0} (\bm{G}_1^l + \bm{G}^l_2):\nabla \bm{w}\qquad \forall\, \bm{w}\in C^\infty_{0,\text{div}}(Q_0)^d. 
\end{equation}
Additionally, as a consequence of \eqref{converg_3_unsteady}, \eqref{converg_pen_term} and \eqref{converg_3_unsteady_strong} we  also have that for any $q\in [1,\min (\check{r},(2r')')$, the sequence $\bm{u}^l$ is bounded in $L^\infty(I_0;W^{1,q}(Q_0)^d)$ and that: 
\begin{align*}
\bm{G}_1^{l} &\rightharpoonup \bm{0}\qquad &\text{weakly in }L^{r'}(Q_0)^{d\times d},\notag\\
\bm{G}_2^{l} &\rightarrow \bm{0}\qquad &\text{strongly in }L^{q}(Q_0)^{d\times d},\notag\\
\bm{u}^{l} &\rightarrow \bm{u}\qquad &\text{strongly in }L^q(Q_0)^d.\notag
\end{align*}
Consequently, the assumptions of Lemma \ref{LipschitzTr_unsteady} are satisfied. It now suffices to prove for an arbitrary $\theta\in (0,1)$ that
\begin{equation}\label{unsteady_integral_limit}
\limsup_{l\rightarrow\infty} \int_{\frac{1}{8}Q_0}[(\bm{D}(\bm{u}^{l})-\textcolor{black}{\bm{\mathcal{D}}}(\cdot,\bm{S})):(\bm{S}^{l}-\bm{S})]^\theta \leq 0,
\end{equation}
\textcolor{black}{Once this has been shown, Chacon's biting lemma and Vitali's convergence theorem will imply, 
together with Lemma \ref{LocalizedMinty}, that $(\bm{D},\bm{S})\in\mathcal{A}(\cdot)$ almost everywhere in $\frac{1}{8}Q_0$ (see the details e.g.\ in \cite{Bulicek:2012}).} From here then the result follows by observing that $Q$ can be covered by a union of such cylinders (e.g.\ by using a Whitney covering).

In order to prove \eqref{unsteady_integral_limit}, first let $\mathcal{B}_{\lambda_{l,j}}\subset \Omega$ be the family of open sets and let $\{\bm{e}^{l,j}\}_{l,j\in\mathbb{N}}$ be the sequence of Lipschitz truncations described in Lemma \ref{LipschitzTr_unsteady}.
If we define
\begin{equation}
H^l(\cdot) := (\bm{D}(\bm{u}^{l})-\textcolor{black}{\bm{\mathcal{D}}}(\cdot,\bm{S})):(\bm{S}^{l}-\bm{S})\in L^1(Q),
\end{equation}
then we have by Hölder's inequality that
\begin{equation*}
\int_{\frac{1}{8}Q_0} |H^l|^\theta \leq |Q|^{1-\theta}\left(\int_{\frac{1}{8}Q_0\setminus\mathcal{B}_{\lambda_{l,j}}} H^l\right)^\theta + |\mathcal{B}_{\lambda_{l,j}}|^{1-\theta}\left(\int_{\frac{1}{8}Q_0 }H^l\right)^\theta.
\end{equation*}
The second term on the right-hand side can be dealt with easily, since $H^l$ is bounded uniformly in $L^1(Q)$ thanks to the a priori estimate \eqref{apriori3_unsteady}, and the properties described in \cref{LipschitzTr_unsteady} imply that 
\begin{equation}\label{Unsteady_II}
\limsup_{l\rightarrow\infty}|\mathcal{B}_{\lambda_{l,j}}|^{1-\theta} \leq \limsup_{l\rightarrow\infty}|\lambda^r_{l,j} \mathcal{B}_{\lambda_{l,j}}|^{1-\theta}\leq c  2^{-j(1-\theta)},\quad \text{for }j\geq j_0,
\end{equation}
where $c$ is a positive constant. For the first term, observe that
\begin{align*}
& \int_{\frac{1}{8}  Q_0\setminus\mathcal{B}_{\lambda_{l,j}}} H^l = \textcolor{black}{ \int_{\frac{1}{8}Q_0} H^l \,\zeta \,\mathds{1}_{\mathcal{B}^c_{\lambda_{l,j}}} }\\
&\hphantom{1}\textcolor{black}{= \int_{\frac{1}{8}Q_0} \bm{D}(\bm{e}^{l}):(\bm{S}^{l}-\bm{S}) \,\zeta \,\mathds{1}_{\mathcal{B}^c_{\lambda_{l,j}}} + \int_{\frac{1}{8}Q_0\setminus\mathcal{B}_{\lambda_{l,j}}} (\bm{D}(\bm{u})-\textcolor{black}{\bm{\mathcal{D}}}(\cdot,\bm{S})):(\bm{S}^{l}-\bm{S})}\\
&\hphantom{1}\textcolor{black}{\leq \left| \int_{\frac{1}{8}Q_0} \bm{D}(\bm{e}^{l,j}):\bm{G}^l_1 \,\zeta \,\mathds{1}_{\mathcal{B}^c_{\lambda_{l,j}}}\right| + \left|\int_{\frac{1}{8}Q_0} (\bm{D}(\bm{u})-\textcolor{black}{\bm{\mathcal{D}}}(\cdot,\bm{S})):(\bm{S}^{l}-\bm{S}) \right| }\\
&\hphantom{1} \textcolor{black}{+ \left| \int_{\mathcal{B}_{\lambda_{l,j}}} (\bm{D}(\bm{u})-\textcolor{black}{\bm{\mathcal{D}}}(\cdot,\bm{S})):(\bm{S}^{l}-\bm{S}) \right|},
\end{align*} 

\noindent
\textcolor{black}{where $\zeta \in C^\infty_{0,\text{div}}(\frac{1}{6}Q_0)$ is the function introduced in \cref{LipschitzTr_unsteady}. Taking $\limsup_{l\rightarrow\infty}$ the assertion follows by taking $j\rightarrow\infty$. In particular, we used for the first term \cref{LipschitzTr_unsteady} part 6, with $\bm{H}=\bm{0}$, for the second term the weak convergence of $\bm{S}^l$ and for the third term the fact that $\{\bm{S}^l\}_{l\in\mathbb{N}}$ is bounded, together with \eqref{Unsteady_II}.} To conclude the proof, note that the fact that $\bm{u}$ is divergence-free and Assumption (A6) imply that $\text{tr}(\bm{S}) = 0$, and so $\bm{S}\in L^{r'}_\text{sym}(\Omega)^{d\times d}\cap  L^{r'}_\text{tr}(\Omega)^{d\times d}$.
\end{proof}
\begin{remark}\label{remark_steady}
	Formulation $\check{\text{A}}_{\text{k},\text{n},\text{m},\text{l}}$ is a four-step approximation in which the indices $k,n,m,l$ refer to the approximation of the graph by smooth functions, the finite element discretisation, the discretisation in time, and the penalty term, respectively. The same approach can be used to define a 3-field formulation for the steady problem and the unsteady problem without convection and the proof remains valid with some simplifications; for instance, for the steady system without convective term, only the indices $k$ and $n$ are needed. Furthermore, in those cases the convergence of the sequence of discrete pressures can be guaranteed in the corresponding Lebesgue spaces. 
\end{remark}
\begin{remark}
	The argument used to prove the existence of the discrete solutions is more involved here than in the original works \cite{Diening:2013,Bulicek:2009}, because the coercivity with respect to $\|\bm{u}^{k,n,m,l}_j\|_{W^{1,r}(\Omega)}$ cannot be deduced from Formulation $\check{\text{A}}_{\text{k},\text{n},\text{m},\text{l}}$ by simply testing with the solution. An alternative approach could be to include in the equation an additional diffusion term of the form: 
	\begin{equation*}
		\frac{1}{k}\int_\Omega |\bm{D}(\bm{u}_j^{k,n,m,l})|^{r-2}\bm{D}(\bm{u}_j^{k,n,m,l}):\bm{D}(\bm{v}),
	\end{equation*}
	which would be completely acceptable if we only cared about the existence of weak solutions, but is undesirable from the point of view of the computation of the finite element approximations, since it introduces an additional nonlinearity in the discrete problem.
\end{remark}

\begin{remark}\label{remark_disc_lipsc}
	In the proof of \cref{Convergence_unsteady_FormA} the limits $k\rightarrow\infty$, $(n,m)\rightarrow\infty$ and $l\rightarrow\infty$ were taken successively. In contrast to the steady case considered in \cite{Diening:2013}, here it is not known whether we can take the limits at once. The result is likely to hold as well, but the proof would require a discrete version of the parabolic Lipschitz truncation, which is not available at the moment.
\end{remark}
\begin{remark}
	In case the \textcolor{black}{symmetric velocity gradient} is a quantity of interest, the approach presented here can be easily extended to a four-field formulation with unknowns $(\bm{D},\bm{S},\bm{u},p)$. The only additional assumption needed in that case would be an inf-sup condition of the form:
	\begin{equation}
		\inf_{\bm{\sigma}\in \Sigma^n_{\text{div}}(\bm{0})}\sup_{\bm{\tau}\in \Sigma^n_\text{sym}}  
		\frac{\int_\Omega \bm{\sigma}:\bm{\tau}}{\|\bm{\sigma}\|_{L^{s'}(\Omega)}\|\bm{\tau}\|_{L^{s}(\Omega)}} \geq \delta_s,
	\end{equation}
	where $\delta_s>0$ is independent of $n$.
\end{remark}
\section{Numerical experiments}
\textcolor{black}{According to the analysis carried out in the previous section, the addition of the penalty term is necessary when $r\in (\frac{2d}{d+2},\frac{3d+2}{d+2}]$. However, in the examples we observed that the method converges regardless of whether the penalty term is present or not. This could be an indication that the requirement to include this penalty term is only a technical obstruction and that there might be a different approach to showing convergence of the numerical method that could avoid its inclusion in the numerical method. On the other hand, it could also be the case that exact solutions with more severe singularities than the ones considered in our numerical experiments are needed to demonstrate pathological behaviour. In any case, it appears that in most applications the penalty term can be safely omitted and for this reason it is not discussed in the numerical examples below.}
\subsection{Carreau fluid and orders of convergence}\label{exp_ratesC}
The framework presented in this work is so broad that in general it is not possible to guarantee uniqueness of solutions; in particular it is not clear how error estimates could be obtained. However, as this computational example will show, the discrete formulations presented here appear to recover the expected orders of convergence in the cases where these orders are known.

In the first part of this numerical experiment we solved the steady problem without convection with the Carreau constitutive law (as stated in \cref{remark_steady}, the same 3-field approximation can be applied in this setting): 
\begin{equation}\label{carreau_exp}
	\bm{S}(\bm{D}) := 2\nu \left(\varepsilon^2 + |\bm{D}^2| \right)^{\frac{r-2}{2}}\bm{D},
\end{equation}
where $r\geq 1$ and $\varepsilon,\nu>0$. This is one of the most common non-Newtonian models that present a power-law structure (note that for $r=2$ we recover the Newtonian model), and has the advantage that it is not singular at the origin (i.e.\ when $\bm{D} = \bm{0}$), unlike the usual power-law constitutive relation. Observe that the constitutive relation is smooth, and therefore only the limit $n\rightarrow\infty$ is needed in the results from the previous section. The problem was solved on the unit square $\Omega = (0,1)^2$ with a Dirichlet boundary condition for the velocity defined so as to match the value of the exact solution, which was chosen as:
\begin{equation}\label{exact_sol_carreau}
	\bm{u}(\bm{x}) = |\bm{x}|^{a-1}(x_2,-x_1)^\text{T},\qquad p(\bm{x}) = |\bm{x}|^b,
\end{equation}
where $a,b$ are parameters used to control the smoothness of the solutions. Define the auxiliary function $\bm{F}:= \mathbb{R}^{d\times d}\rightarrow\mathbb{R}^{d\times d}_{\text{sym}}$ as:
\begin{equation}
	\bm{F}(\bm{B}):= (\varepsilon + |\bm{B}^{\text{sym}}|)^{\frac{r-2}{2}}\bm{B}^{\text{sym}},
\end{equation}
where $\bm{B}^{\text{sym}} := \frac{1}{2}(\bm{B} + \bm{B}^T)$. In \cite{Belenki:2012,Hirn2013} it was proved for systems of the form \eqref{carreau_exp} that if $\bm{F}(\bm{D}(\bm{u}))\in W^{1,2}(\Omega)^{d\times d}$ and $p\in W^{1,r'}(\Omega)$ then the following error estimates hold:
\begin{align*}
	\|\bm{F}(\bm{D}(\bm{u})) - \bm{F}(\bm{D}(\bm{u}^n))\|_{L^2(\Omega)} &\leq c h_n^{\min\{1,\frac{r'}{2}\}},\\
	\|p - p^n\|_{L^{r'}(\Omega)} &\leq c h_n^{\min\{\frac{2}{r'},\frac{r'}{2}\}}.
\end{align*}
In our case, the conditions $\bm{F}(\bm{D}(\bm{u}))\in W^{1,2}(\Omega)^{d\times d}$ and $p\in W^{1,r'}(\Omega)$ amount to requiring that $a>1$ and $b>\frac{2}{r} -1$. These parameters were then chosen to be $a=1.01$ and $b = \frac{2}{r} - 0.99$ in order to be close to the regularity threshold. We discretised this problem with the Scott--Vogelius element for the velocity and pressure and discontinuous piecewise polynomials for the stress variables:
\begin{align*}
	\Sigma^n &= \{\bm{\sigma}\in L^\infty(\Omega)^{d\times d}\, :\, \bm{\sigma}|_K\in\mathbb{P}_k(K)^{d\times d},\text{ for all }K\in \mathcal{T}_n\},\\
	V^n &= \{\bm{w}\in W^{1,r}(\Omega)^d\, :\,\bm{w}|_{\partial\Omega} = \bm{u},\, \bm{w}|_K\in\mathbb{P}_{k+1}(K)^d\text{ for all }K\in \mathcal{T}_n\},\\
	M^n &= \{q\in L^\infty(\Omega)\, :\, q|_k\in \mathbb{P}_{k}(K)\text{ for all }K\in \mathcal{T}_n\}.
\end{align*}
The problem was solved using \texttt{firedrake} \cite{Rathgeber2016} with $\nu = 0.5$, $\varepsilon = 10^{-5}$ and $k=1$ on a barycentrically refined mesh \textcolor{black}{(obtained using \texttt{gmsh} \cite{Gmsh})} to guarantee inf-sup stability. The discretised nonlinear problems were linearised using Newton's method \textcolor{black}{with the $L^2$ line search algorithm of PETSc \cite{PETSc,PETScLi}}; \textcolor{black}{the Newton solver was deemed to have converged when the Euclidean norm of the residual fell below $1\times 10^{-8}$.} The linear systems were solved \textcolor{black}{with a sparse direct solver} from the \texttt{umfpack} library \cite{Umfpack}. In the implementation, the uniqueness of the pressure was recovered not by using a zero mean condition but rather by orthogonalising against the nullspace of constants. The experimental orders of convergence in the different norms are shown in \cref{tbl:table1,tbl:table2} (note that the tables do not contain the values of the numerical error, but rather the order of convergence corresponding to the norm indicated in each column).
\begin{table}[tbhp]
	\begin{center}
		{
			\centering
			\caption{Experimental order of convergence for the steady problem without convection with $r=1.5$.}%
			\label{tbl:table1}%
			\begin{tabular}{ccccc}
				\toprule
				$h_n$ & $\|\bm{F}(\bm{D}(\bm{u}))\|_{L^2(\Omega)}$ & $\|\bm{u}\|_{W^{1,r}(\Omega)}$ &  $\|p\|_{L^{r'}(\Omega)}$  & $\|\bm{S}\|_{L^{r'}(\Omega)}$\\
				\midrule
				0.5  & 0.9075 & 1.0180 & 0.3647 & 0.6692\\
				0.25 & 0.9803 & 1.2160 & 0.5396 & 0.6697\\
				0.125 & 1.0023 & 1.2975 & 0.6565 & 0.6713\\
				0.0625 & 1.0062 & 1.3205 & 0.6706 & 0.6716\\
				0.03125 & 1.0071 & 1.3319 & 0.6715 & 0.6716\\
				\midrule
				Expected & 1.0 & - & 0.667 & -\\
				\bottomrule
			\end{tabular}
		}
	\end{center}
\end{table}  
\begin{table}[!htbp]%
	\centering
	\caption{Experimental order of convergence for the steady problem without convection with $r=1.8$.}%
	\label{tbl:table2}%
	\begin{tabular}{ccccc}
		\toprule
		$h_n$ & $\|\bm{F}(\bm{D}(\bm{u}))\|_{L^2(\Omega)}$ & $\|\bm{u}\|_{W^{1,r}(\Omega)}$ &  $\|p\|_{L^{r'}(\Omega)}$  & $\|\bm{S}\|_{L^{r'}(\Omega)}$\\
		\midrule
		0.5  & 0.9132 & 0.9361 & 0.4955 & 0.8434\\
		0.25 & 0.9826 & 1.0652 & 0.7271 & 0.8822\\
		0.125 &  1.0040 & 1.1073 & 0.8671 & 0.8948\\
		0.0625 & 1.0078 & 1.1167 & 0.8916 & 0.8966\\
		0.03125 & 1.0087 & 1.1197 & 0.8959 & 0.8968\\
		\midrule
		Expected & 1.0 & - & 0.889 & -\\
		\bottomrule
	\end{tabular}
\end{table}

From \cref{tbl:table1,tbl:table2} it can be seen that the algorithm recovers the expected orders of convergence. In the case of the stress we obtain the same order as for the pressure, which seems natural from the point of view of the equation. In \cite{Hirn2013} it is claimed that for $r<2$ the order of convergence for the velocity should be equal to 1; in our numerical simulations the experimental order of convergence seems to approach $\frac{2}{r}$, which is slightly larger than 1. This difference may be due to the fact that in \cite{Hirn2013} the author works with piecewise linear elements for the velocity while here  quadratic elements were employed.

In the second part of the experiment we employed again the Carreau constitutive law \eqref{carreau_exp}, but now considering the full system \eqref{PDE_goal}. The right-hand side, initial condition and boundary condition were chosen so as to match the ones defined by the exact solution:
\begin{equation*}
	\bm{u}(t,\bm{x}) = t|\bm{x}|^{a-1}(x_2,-x_1)^\text{T},\quad p(t,\bm{x}) = t^2|\bm{x}|^b.
\end{equation*}
In \cite{Eckstein2018}, the following error estimate for the approximation of time-dependent systems of this form, but without convection, was obtained for $r\in [\frac{2d}{d+2},\infty)$:
\begin{equation*}
	\|\bm{u} - \overline{\bm{u}}^{n,m}\|_{L^\infty(0,T;L^2(\Omega))} + \|\bm{F}(\bm{D}(\bm{u}))- \bm{F}(\bm{D}(\overline{\bm{u}}^{n,m}))\|_{L^2(Q)} \leq c\left(\tau_m + h_n^{\min\{1,\frac{2}{r}\}}\right),
\end{equation*}
assuming that $\bm{u}_0\in W^{1,r}_{0,\text{div}}(\Omega)^d$ and that the following additional regularity properties of the solution and the data hold:
\begin{gather*}
	\|\nabla\bm{F}(\bm{D}(\bm{u}_0))\|_{L^2(\Omega)} + \|\nabla\bm{S}(\bm{D}(\bm{u}_0))\|_{L^2(\Omega)} \leq c,\\
	\|\bm{u}\|_{W^{1,2}(0,T;L^2(\Omega))} + \|\bm{u}\|_{L^{2}(0,T;W^{2,2}(\Omega))} + \|\bm{F}(\bm{D}(\bm{u}))\|_{L^2(0,T;W^{1,2}(\Omega))}\leq c.
\end{gather*}
The same order of convergence was obtained in \cite{Berselli2015} for $r\in (\frac{3}{2},2]$ in 3D for a semi-implicit discretisation of the unsteady system with convection assuming that $\bm{u}_0\in W^{2,2}_{0,\text{div}}(\Omega)^d$, $\text{div}\,\bm{S}(\bm{D}(\bm{u}_0))\in L^2(\Omega)^d$ and that the slightly different regularity assumptions hold:
\begin{equation*}
	\|\partial_t\bm{u}\|_{L^\infty(0,T;L^2(\Omega))} + \|\bm{F}(\bm{D}(\bm{u}))\|_{W^{1,2}(Q)} + \|\bm{F}(\bm{D}(\bm{u}))\|_{L^{2((5r-6)/(2-r))}(0,T;W^{1,2(\Omega)})}\leq c.
\end{equation*}

The problem was solved until the final time $T=0.1$ with the same parameters as above; observe that this choice of parameters guarantees that the required regularity properties are satisfied. Table \ref{tbl:table_unsteady} shows the experimental order of convergence for $r=1.7$.
\begin{table}[!htbp]%
	\centering
	\caption{Experimental order of convergence for the full problem with $r=1.7$.}%
	\label{tbl:table_unsteady}%
	\begin{tabular}{ccccc}
		\toprule
		$h_n$ & $\tau_m$ & $\|\bm{F}(\bm{D}(\bm{u}))\|_{L^2(Q)}$ & $\|\bm{u}\|_{L^\infty(0,T;L^2(\Omega))}$\\
		\midrule
		0.5  & 0.001 & 0.9226 & 1.8703\\
		0.25 & 0.0005 & 0.9865 & 1.9564\\
		0.125 & 0.00025 & 1.0057 & 1.9497\\
		0.0625 & 0.000125 & 1.0084 & 1.9440\\
		0.03125 & 0.0000625 & 1.0075 & 1.9451\\
		\midrule
		Expected & & 1.0 & 1.0 \\
		\bottomrule
	\end{tabular}
	\end{table}
	The order of convergence for the natural norm $\|\bm{F}(\bm{D}(\bm{u}))\|_{L^2(Q)}$ agrees with the one expected from the theoretical results, while for the velocity we obtain a higher order. This is again likely to be due to the fact that quadratic elements were employed for the velocity variable, while the analysis was performed for linear elements.  
	\iftoggle{arxiv}{
			\subsection{Role of the penalty term}\label{section_penalty}
			In this computational experiment we investigate the role of the penalty term in the algorithm, to explore whether its presence is essential to ensure convergence. Similarly to Section \ref{exp_ratesC} we consider the steady problem first. The same exact solution was employed, because it allows us to carefully select its regularity. In this case Taylor--Hood elements were employed for the velocity and pressure, and discontinuous piecewise polynomials for the stress:
			\small
			\begin{align*}
				\Sigma^n &= \{\bm{\sigma}\in L^\infty(\Omega)^{d\times d}\, :\, \bm{\sigma}|_K\in\mathbb{P}_1(K)^{d\times d},\text{ for all }K\in \mathcal{T}_n\},\\
				V^n &= \{\bm{w}\in W^{1,r}(\Omega)^d\, :\,\bm{w}_\tau|_{\partial\Omega_1}=0,\,\bm{w}|_{\partial\Omega_2} = \bm{0},\, \bm{w}|_K\in\mathbb{P}_{2}(K)^d\text{ for all }K\in \mathcal{T}_n\},\\
				M^n &= \{q\in L^\infty(\Omega)\cap C(\overline{\Omega})\, :\, q|_k\in \mathbb{P}_{1}(K)\text{ for all }K\in \mathcal{T}_n\}.
			\end{align*}
			\normalsize
			This question is only relevant when the discretely divergence-free elements are not pointwise divergence-free, because otherwise the condition $r>\frac{2d}{d+2}$ is sufficient to allow us to pass to the limit in the convective term. In the steady case, without the penalty term and with elements that are not exactly divergence-free, the convergence of the finite element approximations still holds (see \cref{remark_steady}), but assuming the stronger assumption $r>\frac{2d}{d+1}$ (cf.\ \cite{Diening:2013}). According to this, the addition of the penalty term is then necessary in the convergence analysis when the elements are not exactly divergence-free and $r\in(\frac{2d}{d+2},\frac{2d}{d+1}]$. Table \ref{tbl:table_penaltyterm1} shows the experimental orders of convergence for $r=1.3$ (just like in Section \ref{exp_ratesC}, the table shows not the numerical error, but the experimental order of convergence).

			In the experiment for the time-dependent problem we chose in this case the steady state solution \eqref{exact_sol_carreau} with the same parameters described above and used it to define the initial and boundary conditions. In this case, our convergence analysis dictates that the addition of the penalty term is necessary when $r\in (\tfrac{2d}{d+2},\tfrac{3d+2}{d+2}]$; however, the result is expected to hold for $r\in (\tfrac{2(d+1)}{d+2},\tfrac{3d+2}{d+2}]$ as well (see \cref{remark_disc_lipsc}). We therefore chose a value of $r$ in the interval $(\tfrac{2d}{d+2},\tfrac{2(d+1)}{d+2}]$. The experimental orders of convergence for this case are shown in Table \ref{tbl:table_unsteady_penalty}.

			\begin{table}[!htpb]
				\caption{Experimental order of convergence for the steady problem with $r=1.3$.}%
				\label{tbl:table_penaltyterm1}
				\subfloat[][With penalty term]{
					\resizebox{6.2cm}{!}{\begin{tabular}{ccc}
							\toprule
							$h_n$ & $\|\bm{F}(\bm{D}(\bm{u}))\|_{L^2(\Omega)}$ &  $\|p\|_{L^{r'}(\Omega)}$\\
							\midrule
							0.5  & 0.97295673154 &  1.91148217955\\
							0.25 & 1.00506435728 &  0.470815332994  \\
							0.125 & 1.0089872966  & 0.51434542432 \\
							0.0625 & 1.00879694502  & 0.472841717098 \\
							0.03125 & 1.00895395592  & 0.463776304819 \\
							\midrule
							Expected & 1.0 & 0.461\\
							\bottomrule
						\end{tabular}
				}}
				\subfloat[][Without penalty term.]{
					\resizebox{6.2cm}{!}{\begin{tabular}{ccc}
							\toprule
							$h_n$ & $\|\bm{F}(\bm{D}(\bm{u}))\|_{L^2(\Omega)}$ &  $\|p\|_{L^{r'}(\Omega)}$\\
							\midrule
							0.5  & 0.96952971684 &  2.0714861746\\
							0.25 & 0.99106185672 &  0.75134653896  \\
							0.125 & 0.99936073804  & 0.53246973909 \\
							0.0625 & 1.00160288555  & 0.481960358342 \\
							0.03125 & 1.004106766  & 0.469496736474 \\
							\midrule
							Expected & 1.0 & 0.461\\
							\bottomrule
						\end{tabular}
				}}
				\vspace{-0.3cm}
			\end{table}
			\begin{table}[!htbp]%
				\centering
				\caption{Experimental order of convergence for the $\|\bm{F}(\bm{D}(\bm{u}))\|_{L^2(Q)}$ norm for the full problem with $r=1.3$.}%
				\label{tbl:table_unsteady_penalty}%
				\begin{tabular}{ccccc}
					\toprule
					$h_n$ & $\tau_m$ & With Penalty Term & Without Penalty Term\\
					\midrule
					0.5  & 0.005 & 9.73599147231 & 5.502165863559\\
					0.25 & 0.0025 & 1.008703378392 & 0.98183996942\\
					0.125 & 0.00125 & 1.00651090357 & 1.00190875446\\
					0.0625 & 0.000625 & 1.0154632500 & 1.00811647604\\
					0.03125 & 0.0003125 & 1.028436230 & 1.01326314547\\
					\midrule
					Expected & & 1.0 & 1.0 \\
					\bottomrule
				\end{tabular}

			\end{table}
		What we see in these experiments is that the method converges regardless of whether there is a penalty term or not. This could be an indication that the requirement to include this penalty term is only a technical obstruction and that there might be a different approach to showing convergence of the numerical method that could avoid it. On the other hand, it could also be the case that the point singularity present in the exact solution of this  model problem does not suffice to demonstrate pathological behaviour. In any case, we believe that in most applications the penalty term can be safely omitted.}

		\subsection{Navier--Stokes/Euler activated fluid}
		In this section we will consider the classical lid--driven cavity problem with the non--standard constitutive relation:
		\begin{equation}\label{Euler_NS_ConstRel}
			\renewcommand{\arraystretch}{1.5}
			\left\{\begin{array}{cc}
					\left\{
						\begin{array}{cc}
							\bm{D} = \delta_s\frac{\bm{S}}{|\bm{S}|} + \frac{1}{2\nu} \bm{S}, & \textrm{ if } |\bm{D}|\geq \delta_s,\\
							\bm{S} = 0, & \textrm{ if }|\bm{D}|<\delta_s,\\
					\end{array} \right. & \textrm{if }(x-\tfrac{1}{2})^2 + (y-\tfrac{1}{2})^2 \leq (\tfrac{3}{8})^2,\\
					\bm{D} = \frac{1}{2\nu} \bm{S}, & \text{ otherwise },
				\end{array}
			\right.
				\end{equation}
				where $\nu>0$ is the viscosity and $\delta_s\geq 0$. This is an example of an activated fluid that in the middle of the domain transitions between a Newtonian fluid (i.e.\ Navier--Stokes) and an inviscid fluid (i.e.\ Euler) depending on the magnitude of the \textcolor{black}{symmetric velocity gradient} (for a more thorough discussion of activated fluids see \cite{Blechta2019}). It is analogous to the Bingham constitutive equation for a viscoplastic fluid, but with the roles of the stress and \textcolor{black}{symmetric velocity gradient} interchanged; the fact that we can swap the roles of the stress and the \textcolor{black}{symmetric velocity gradient} in constitutive relations without any problem is a significant advantage of the framework presented here.

The problem was solved on the unit square $\Omega = (0,1)^2$ with the rest state as the initial condition and with the following boundary conditions:
\begin{align*}
\partial\Omega_1 &= (0,1)\times \{1\}, \qquad &\partial\Omega_2 := \partial\Omega \setminus \partial\Omega_1,\\
\bm{u} &= \bm{0}\qquad &\text{ on }(0,T)\times\partial\Omega_2,\\
\bm{u} &= (x^2(1-x)^216y^2,0)^\text{T}\qquad &\text{ on }(0,T)\times\partial\Omega_1.
\end{align*}
				
\textcolor{black}{Although \eqref{Euler_NS_ConstRel} has a complicated form, there is a continuous (in $\bm{D}$) selection available:}
				\begin{equation}\label{eq:Euler_NS_ConstRel_Sel}
\renewcommand{\arraystretch}{1.5}
\textcolor{black}{\bm{S}=\bm{\mathcal{S}}(x,y,\bm{D}) := }\left\{
\begin{array}{cc}
	\textcolor{black}{2\nu \left(|\bm{D}|-\delta_s\mathds{1}_{B_{3/8}(1/2)}(x,y)\right)^+ \frac{\bm{D}}{|\bm{D}|},} & \textcolor{black}{\textrm{ if } |\bm{D}|\neq 0,}\\
	\textcolor{black}{\bm{0}, }& \textcolor{black}{\textrm{ if }|\bm{D}|=0.}\\
\end{array}
\right.
\end{equation}

\textcolor{black}{While the selection stated in \eqref{eq:Euler_NS_ConstRel_Sel} is already continuous in $\bm{D}$, Newton's method requires Fréchet-differentiability of $\bm{\mathcal{S}}$ with respect to $\bm{D}$} and the constitutive law is not smooth when $|(x-\tfrac{1}{2},y-\tfrac{1}{2})|<\tfrac{3}{8}$; \textcolor{black}{therefore some regularisation was required for the purpose of applying Newton's method (an alternative would have been to use a non-smooth generalisation such as a semismooth Newton method).} For this problem we chose a Papanastasiou-like regularisation (cf.\ \cite{Papanastasiou1987}); the Papanastasiou regularisation has been successfully applied to several problems with Bingham rheology \cite{Chatzimina2005,Dimakopoulos2003,Mitsoulis2004}. The regularised constitutive relation reads:
\begin{equation}\label{papanastasiou2_reg}
	\textcolor{black}{\bm{D} = \frac{1}{2\nu}}\left(\frac{\delta_s (1 - \exp(-M|\bm{S}|))}{|\bm{S}|}+1\right)\bm{S}\quad\text{for }(x-\tfrac{1}{2})^2 + (y-\tfrac{1}{2})^2 \leq (\tfrac{3}{8})^2,
\end{equation}
where $M>0$ is the regularisation parameter (as $M\rightarrow \infty$ we recover the constitutive relation \eqref{Euler_NS_ConstRel}, see \cref{fig:papanastasiou2}); \textcolor{black}{note that this is not related to the regularisation \eqref{mollification}, which has the goal of turning the \emph{measurable} selection into a continuous function}. For the velocity and pressure we used Scott--Vogelius elements and discontinuous piecewise polynomials were used for the stress (cf.\ \cref{exp_ratesC}); 
the problem was implemented in \texttt{firedrake} with $k=1$, $\nu=\frac{1}{2}$, \textcolor{black}{using the same parameters for the linear and nonlinear solvers described in the previous section,} and continuation was employed to reach the values $M=200$ and $\delta_s = 2.5$\textcolor{black}{; more precisely, the problem was initially solved with $M=100$ and $\delta_s=0$ and that solution was used as the Newton guess for the problem with $M+1$ and $\delta_s+0.05$, repeating the procedure until the desired values were reached}. The time step was chosen as $\tau_m = 5\times 10^{-6}$ and the algorithm was applied until \textcolor{black}{the $L^2$ norm of the difference of solutions at subsequent time steps was less than $1\times 10^{-6}$}.
\begin{figure}[!hbtp]
  \centering
\includegraphics[width=.6\textwidth]{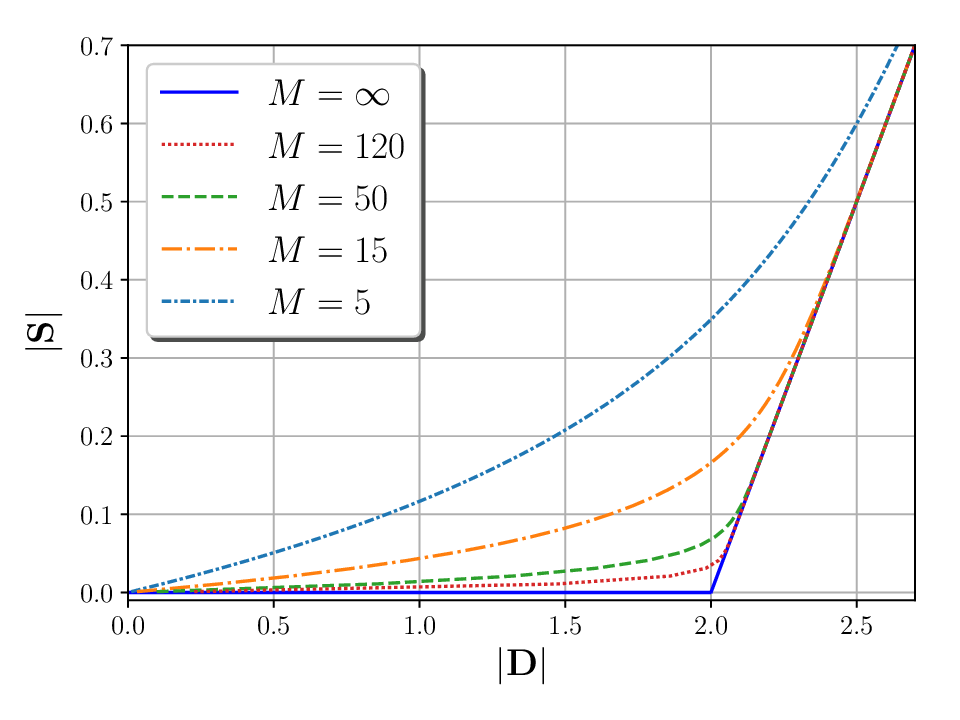}
\vspace{-0.4cm}
  \caption[The short caption]{Regularised constitutive relation for different values of $M$ and $\delta_s = 2$.}
 \label{fig:papanastasiou2}
\end{figure}

Note that when the `yield strain' parameter $\delta_s$ vanishes, we recover the usual Navier--Stokes system. On the other end, if $\delta_s$ is taken to be very large this could be taken as an approximation of the incompressible Euler system in the center of the square; notice how in \cref{fig:streamlines} the fluid picks up more speed in the middle of the domain when $\delta_s>0$ due to the absence of viscosity. This could be an attractive approach to simulating the effects of boundary layers, because it is backed up by a rigorous convergence result; near the boundary the fluid could behave in a Newtonian way and far away $\delta_s$ could be taken arbitrarily large so as to make the effects of the viscosity negligible. This is just one of the possibilities that are yet to be explored within this framework of implicitly constituted fluids and mixed formulations and will be studied in more depth in future work.
\begin{figure}[!htpb]
  \centering
\includegraphics[width=\textwidth]{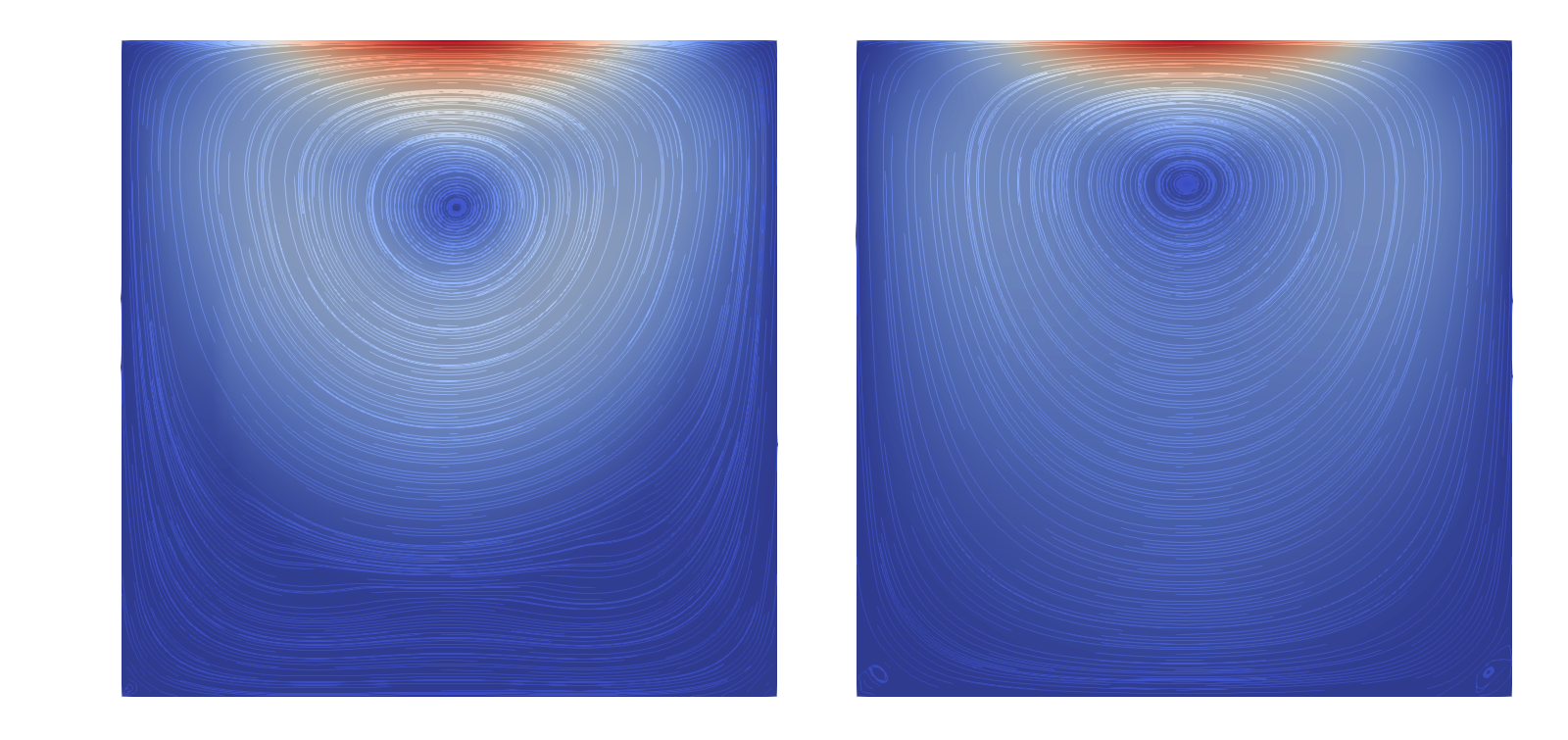}
\vspace{-0.3cm}   
  \caption[The short caption]{Streamlines of the steady state for the problem with $\delta_s = 2.5$ (left) and the Newtonian problem (right).}
  \label{fig:streamlines}
\end{figure}

\Cref{fig:stress_profile} shows the magnitudes of $\bm{S}$ and $\bm{D}$ along the line $x=0.65$ for the steady state of the non-Newtonian problem; it can be clearly seen that the stress is negligibly small for low values of the \textcolor{black}{symmetric velocity gradient} in the center of the square and it then suddenly becomes proportional to it. This transition is not the sharpest in the figure because the regularisation parameter $M$ was not taken sufficiently large, but in the limit this would recover the non-smooth relation. In a sense this is similar to solving a Navier--Stokes problem with high Reynolds number, so for high values of $M$ some stabilisation would be required in order to solve this systems efficiently (even more so if the Newtonian fluid outside of the activation region also has a high Reynolds number); this will be the subject of future research. 
\begin{figure}[!hbtp]
  \centering
\includegraphics[width=.85\textwidth]{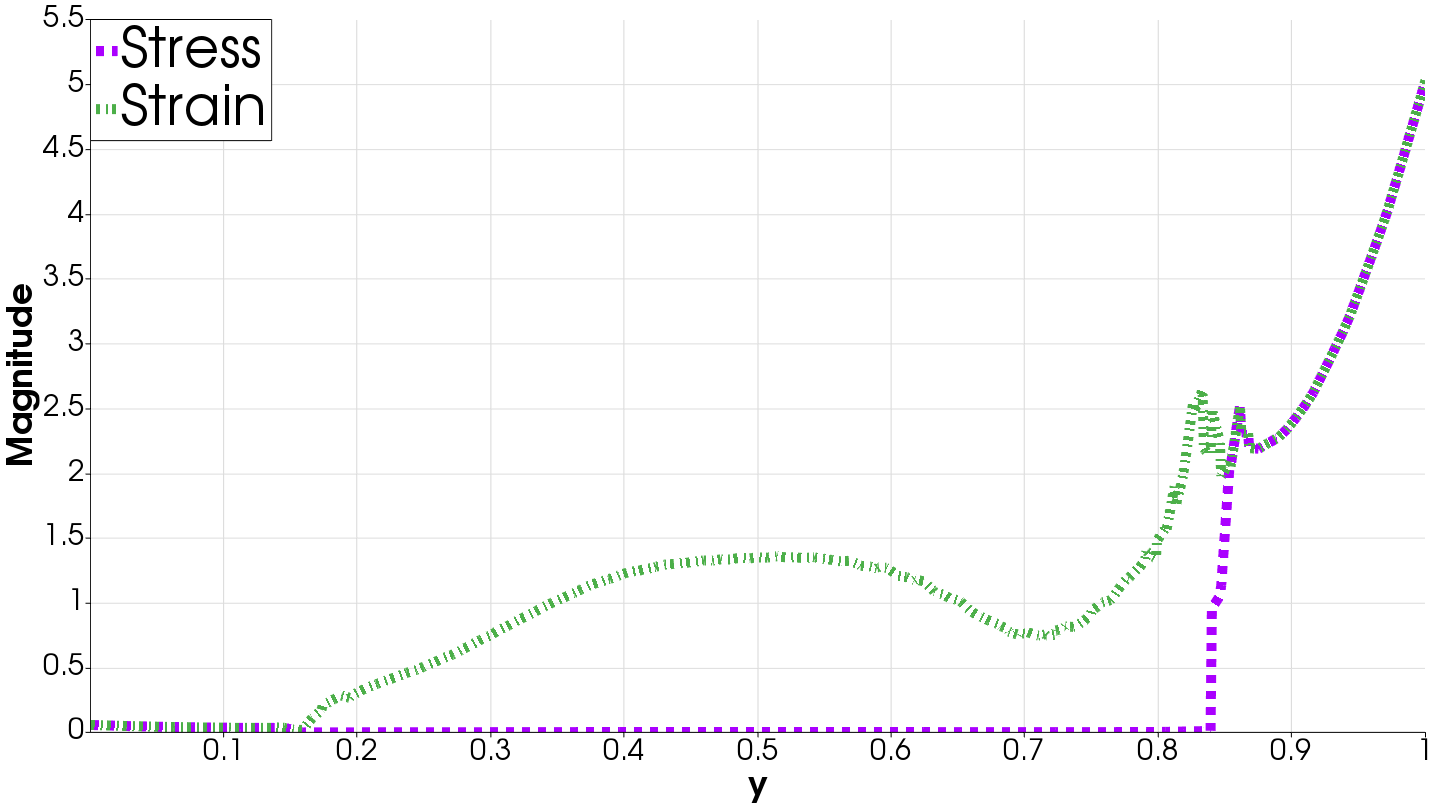}
\vspace{-0.3cm}
  \caption[The short caption]{Magnitude of $\bm{S}$ and $\bm{D}$ at $x=0.65$ for the problem with $\delta_s = 2.5$.}
 \label{fig:stress_profile}
\end{figure}
\subsection{Cessation of the Couette flow of a Bingham fluid}\label{num_bingham}
The flow between two parallel plates induced by the movement at constant speed of one of the plates receives the name of (plane) Couette flow. It is one of the few examples of a configuration that allows us to find an exact solution for the steady Navier--Stokes equations and it is well known that this solution has a linear profile. In this numerical experiment we will take the Couette flow as the initial condition and investigate the behaviour of the system when the plates stop moving. Physically it is expected that the viscosity and no--slip boundary condition will slow down the flow until it finally stops; it can be seen in \cite{Papanastasiou1999} that in the Newtonian case the flow does reach the rest state, albeit in infinite time.

In this section we will solve system \eqref{PDE_goal} with the Bingham constitutive relation:
\begin{equation*}
\renewcommand{\arraystretch}{1.5}
\left\{
\begin{array}{cc}
\bm{S} = \tau_y\frac{\bm{D}}{|\bm{D}|} + 2\nu \bm{D}, & \textrm{ if } |\bm{S}|\geq \tau_y,\\
  \bm{D} = 0, & \textrm{ if }|\bm{S}|<\tau_y,\\
\end{array}
\right.
\end{equation*}
where $\nu>0$ is the viscosity and $\tau_y \geq 0$ is called the yield stress. This is the most common model for a viscoplastic fluid, which is a material that for low stresses (i.e.\ with a magnitude below the yield stress $\tau_y$) behaves like a solid and like a Newtonian fluid otherwise. Interestingly, viscoplastic fluids in the configuration described above reach the rest state in a finite time and there are theoretical upper bounds for the so called \textit{cessation time} (see \cite{Glowinski1984,Huilgol2002}), which makes this a good problem to test the numerical algorithm. \textcolor{black}{Just as in the previous section, for this problem there is also a continuous selection available:}
\begin{equation}
\renewcommand{\arraystretch}{1.5}
\textcolor{black}{\bm{D} = \bm{\mathcal{D}}(\bm{S}):= }\left\{
\begin{array}{cc}
	\textcolor{black}{  \frac{1}{2\nu}(|\bm{S}|-\tau_y)^+\frac{\bm{S}}{|\bm{S}|}, }& \textcolor{black}{\textrm{ if } |\bm{S}|\neq 0,}\\
	\textcolor{black}{ \bm{0}, }&\textcolor{black}{ \textrm{ if }|\bm{S}|=0.}\\
\end{array}
\right.
\end{equation}

For this experiment we \textcolor{black}{again} applied the Papanastasiou regularisation to the non-smooth constitutive relation, \textcolor{black}{ in order to be able to apply Newton's method.} After nondimensionalisation this regularised constitutive law takes the form (compare with \eqref{papanastasiou2_reg}):
\begin{equation}\label{papanastasiou_reg}
\bm{S}(\bm{D}) = \left(\frac{Bn}{|\bm{D}|}(1 - \exp(-M|\bm{D}|))+1\right)\bm{D},
\end{equation}
where $Bn = \frac{\tau_y L}{\nu U}$ is the Bingham number (here $U$ and $L$ are a characteristic velocity and length of the problem, respectively), and $M>0$ is the regularisation parameter (as $M\rightarrow\infty$ we recover the non--smooth relation; compare with \cref{fig:papanastasiou2}).
The problem was solved on the unit square $\Omega = (0,1)^2$ with the following boundary conditions:
\begin{gather*}
\partial\Omega_1 = \{0\}\times(0,1)\cup\{1\}\times(0,1), \qquad \partial\Omega_2 := (0,1)\times\{1\}\cup(0,1)\times\{0\},\\
\bm{u} = \bm{0}\qquad \text{ on }(0,T)\times\partial\Omega_2,\\
\bm{u}_\tau = 0\qquad \text{ on }(0,T)\times\partial\Omega_1,\\
-p + \bm{S}\bm{n}\cdot \bm{n} = 0, \quad \text{ on }(0,T)\times\partial\Omega_1, 
\end{gather*}
where $\bm{u}_\tau$ denotes the component of the velocity tangent to the boundary and $\bm{n}$ is the unit vector normal to the boundary. The initial condition was taken as a standard Couette flow:
\begin{equation*}
\bm{u}(0,\bm{x}) = (1-x_2,0)^\text{T}.
\end{equation*}
For the velocity and pressure we used Taylor--Hood elements and discontinuous piecewise polynomials for the stress\iftoggle{arxiv}{ (cf.\ Section \ref{section_penalty}).}. 
This problem was implemented in FEniCS \cite{Logg2011} \textcolor{black}{using the same parameters for the nonlinear and linear solvers described in the previous section, }with $k=1$ and a timestep $\tau_m$ between $5\times 10^{-7}$ and $1\times 10^{-6}$ for the different values of the Bingham number.
We quantify the change in the flow through the volumetric flow rate (observe that it is constant in $x_1$):
\begin{equation*}
Q(t) := \int_0^1 (1,0)\cdot\bm{u}(t,\bm{x})\,\text{d}x_2,
\end{equation*} 
whose evolution in time is shown in Figure \ref{fig:flow_rate} for different values of the Bingham number. An exponential decay of the flow rate is observed in Figure \ref{fig:flow_rate}, while for positive values of the Bingham number this decay is much faster; these results agree with the ones reported in \cite{Huilgol2002,Chatzimina2005}. In \cite{Chatzimina2005} the problem was solved by integrating a one-dimensional equation for $u_2$; the framework presented here recovers the results obtained there but at the same time has the advantage that it can be applied to a much broader class of problems and geometries.
\begin{figure}[!hbtp]
  \centering
\includegraphics[width=.7\textwidth]{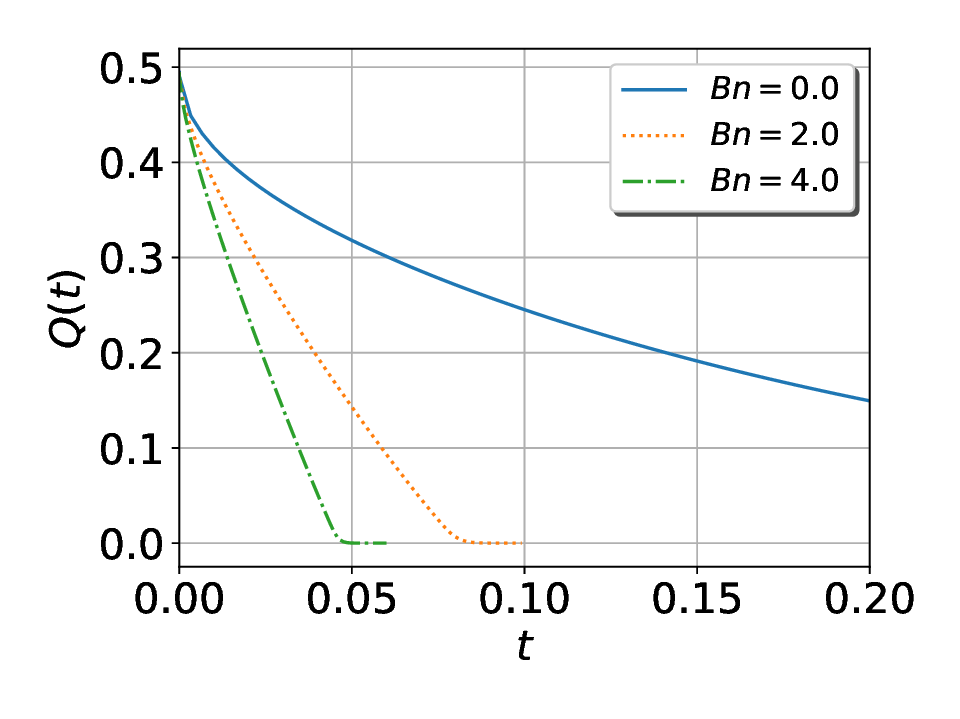}
\vspace{-0.9cm}
  \caption[The short caption]{Evolution of the volumetric flow rate.}
 \label{fig:flow_rate}
\end{figure}

\section{Conclusions}
In this work we presented a 3-field finite element formulation for the numerical approximation of unsteady implicitly constituted incompressible fluids and identified the necessary conditions that guarantee the convergence of the sequence of numerical approximations to a solution of the continuous problem. Although the convergence analysis was written in terms of a selection $\bm{\mathcal{D}}$, the finite element formulation presented here can be used in practice with a fully implicit relation; this is in contrast to the works \cite{Diening:2013,Sueli2018}, where the algorithms relied on finding an approximate constitutive law expressing the stress $\bm{S}^k$ in terms of the \textcolor{black}{symmetric velocity gradient} $\bm{D}^k$, which, while always theoretically possible, is not practical for many models. We also presented numerical experiments that showcase the variety of models that the framework of implicitly constituted models can incorporate.

\subsection{Acknowledgements}
The second author is grateful for useful discussions with J. Málek, T. Tscherpel, and J. Blechta.
\bibliographystyle{siamplain}

\end{document}